\def\@Rref#1{\hbox{\rm \ref{#1}}}
\def\Rref#1{\@Rref{#1}}
\theoremstyle{plain}
\newtheorem{theorem}{Theorem}[section]
\newtheorem{proposition}[theorem]{Proposition}
\newtheorem{assumption}[theorem]{Assumption}
\newtheorem{lemma}[theorem]{Lemma}
\theoremstyle{definition}
\newtheorem{definition}{Definition}[section]
\newtheorem{remark}[definition]{Remark}
\newcommand{\smax}{\ensuremath{\mathrm{s}_{\max}}}
\newcommand{\smin}{\ensuremath{\mathrm{s}_{\min}}}
\newcommand{\Ls}{\ensuremath{\mathrm{L}^{2}}}
\newcommand{\Hs}{\ensuremath{\mathrm{H}^{1}_{0}}}
\newcommand{\Hso}{\ensuremath{\mathrm{H}^{1}}}
\newcommand{\sd}{\ensuremath{\mathrm{d}}}
\DeclareMathOperator{\Ran}{Ran}
\DeclareMathOperator{\Ker}{Ker}
\begin{document}

\title[Synthesis-operator approach]{Data-driven control of continuous-time 
systems: \\ A synthesis-operator approach}

\thispagestyle{plain}

\author{Masashi Wakaiki}
\address{Graduate School of System Informatics, Kobe University, Nada, Kobe, Hyogo 657-8501, Japan}
 \email{wakaiki@ruby.kobe-u.ac.jp}
 \thanks{This work was supported in part by 
 	JSPS KAKENHI Grant Number 24K06866.}

\begin{abstract}
This paper addresses data-driven control of 
continuous-time systems.
We develop a framework based on synthesis
operators associated with state and input trajectories.
A key advantage of the proposed method is that
it does not require the state derivative
and uses continuous-time data directly without sampling or filtering.
First, systems consistent with the data are 
represented in terms of synthesis operators, 
into which the data trajectories are embedded.
Next, we characterize data
informativity properties for
system identification and for stabilization
in the noise-free case.
Finally, we establish a necessary
and sufficient condition for noisy data to be informative
for quadratic stabilization.
All these informativity characterizations are formulated in terms of finite-dimensional matrices, by leveraging the finite-rank structure of the synthesis operators.
\end{abstract}

\keywords{Continuous-time systems, data-driven control,
	informativity, stabilization.} 

\maketitle

\section{Introduction}
\subsubsection*{Motivation and literature review}
Data-driven control has gained attention 
as an alternative paradigm 
for model-based control.
Instead of identifying an explicit system model, 
data-driven approaches 
analyze an unknown system and 
construct controllers 
directly from measured trajectory data.
One of the cornerstones in this field is
the fundamental lemma by 
Willems {\em et.~al.}~\cite{Willems2005}, which
essentially states that 
for linear time-invariant systems,
all possible trajectories 
can be represented by finitely many trajectories
if the input data are persistently exciting.
This lemma has spurred various
data-driven techniques for 
system analysis and 
controller design, using
system representations based on Hankel matrices of data trajectories; see, e.g., \cite{DePersis2020,Koch2022,Alsalti2025}.

The notion of data informativity,  introduced in \cite{Waarde2020},
provides another theoretical framework for data-driven control.
This framework examines whether the available data suffice to guarantee a specified property for all systems consistent with the data, 
and whether controllers that achieve the desired control objectives
for all data-consistent systems
can be constructed from the data.
Informativity for stabilization was characterized in \cite{Waarde2020}, and subsequent studies \cite{Waarde2022TAC,Waarde2023SIAM,Bisoffi2024,Kaminaga2025} addressed noisy data.
See the introductory
article \cite{Waarde2023} for further references on data informativity.
Much of the literature has focused on discrete-time systems, reflecting the sampled nature of measurements.
However, many physical systems evolve in continuous time.
This paper investigates informativity for stabilization of continuous-time systems.

For data-driven control of continuous-time systems,
it is assumed, e.g., in 
\cite{DePersis2020, Bisoffi2022, Eising2025} that 
the state derivative is either measured directly 
or estimated from sampled state trajectories.
In practice, however, 
accurate derivative information is often 
unavailable because measurements are corrupted by noise.
To overcome this limitation, 
several derivative-free data-driven control 
methods have been proposed.
In \cite{DePersis2024,Song2025}, discrete sequences
were obtained from the integral form of the state equation.
In \cite{Rapisarda2024}, the state and input trajectories were transformed
into discrete sequences using a polynomial orthogonal basis. 
The approach proposed in \cite{Ohta2024MTNS,Ohta2024,Wang2025}
is based on sampled data obtained via linear functionals.
Another line of work 
applied filtering to measured signals to 
avoid the state derivative; see
\cite{Bosso2025,Possieri2025,Bosso2025arXiv,Gao2025,Bosso2025Noisy,Possieri2026}.

\subsubsection*{Contributions and comparisons}
We develop a derivative-free approach 
for continuous-time systems 
by embedding data trajectories into 
synthesis operators.
Synthesis operators have been extensively 
investigated in 
frame theory
(see, e.g., \cite{Christensen2016}), and 
discrete synthesis operators were
employed to study 
data-driven control of discrete-time 
infinite-dimensional systems in \cite{Wakaiki2025Informativity}.
While
synthesis operators are typically defined on
$\Ls$-spaces,  in this work we consider 
synthesis operators acting on
$\Hs$-spaces. 
The main contributions of this synthesis-operator approach are as follows.
\begin{enumerate}
	\renewcommand{\labelenumi}{\textup{(\roman{enumi})}}
	\item We give a necessary and sufficient condition
	for a system to be consistent with given data, where
	synthesis operators 
	play the same role as data matrices in 
	the discrete-time setting.

	\item 
	We characterize data informativity properties for identification and for stabilization in 
	terms of synthesis operators under noise-free conditions. 
	The derived conditions can be verified through matrix computations by virtue of the finite-rank structure of the synthesis operators.
	\item We establish a necessary and sufficient condition
	for data corrupted by
	process noise to be informative for quadratic stabilization.
	This condition
	is in the form of linear matrix inequalities (LMIs) without
	approximation.
\end{enumerate}

The main advantage of 
this synthesis-operator approach, compared with the existing derivative-free methods mentioned above, 
is that it is formulated in terms of the original continuous-time data
without relying on sampled or filtered data representations.
Sampled data do not explicitly retain inter-sample information,
and filtering approaches require tuning parameters to condition the data.
Such reformulations may shift the focus of informativity analysis from the original data to the processed data, as in 
\cite{Rapisarda2024, Ohta2024MTNS,Ohta2024,Wang2025}.
In contrast, 
the synthesis-operator approach handles continuous-time data directly. This provides a framework to study not only sufficient but also necessary conditions for the original data to be informative, in parallel with the discrete-time setting.
Moreover, methods for
system analysis and controller design can be developed
without sampling or filtering parameters.

\subsubsection*{Organization}
In Section~\ref{sec:Data_syn},
we introduce the notion of synthesis operators and
use them to characterize
systems that are consistent with the given data.
Sections~\ref{sec:identification} and
\ref{sec:stabilization} investigate informativity properties 
of noise-free data
for system identification and for stabilization,
respectively.
In Section~\ref{sec:stabilization_noise},
we establish
a necessary and sufficient LMI condition
for noisy data to be informative for quadratic
stabilization. 
Section~\ref{sec:conclusion} provides concluding remarks.
\subsubsection*{Notation}
We denote by 
$\mathbb{S}^n$
the set of $n \times n $ real symmetric matrices
and by $I_n$
the $n \times n $  identity matrix.
The transpose 
and the Moore--Penrose 
pseudoinverse of a matrix $A$ are denoted by $A^{\top}$ and $A^+$, respectively.
If $A \in \mathbb{S}^n$ is positive definite (resp. nonnegative definite), then we write
$A \succ 0$ (resp. $A \succeq 0$).
Analogous notation applies to negative definite and nonpositive definite matrices.

Let $\tau >0$. 
We denote by $\Ls([0,\tau];\mathbb{R}^n)$
the space of measurable functions 
$f  \colon [0,\tau] \to \mathbb{R}^n$ satisfying
$\int_0^{\tau} \|f(t) \|^2 dt < \infty$.
The Sobolev space $\Hso([0,\tau];\mathbb{R}^n)$
consists of all absolutely continuous functions $\phi\colon
[0,\tau] \to \mathbb{R}^n$ satisfying $\phi' \in \Ls([0,\tau];\mathbb{R}^n)$.
We write
$\Ls[0,\tau] \coloneqq \Ls([0,\tau];\mathbb{R})$
and 
$\Hso[0,\tau] \coloneqq \Hso([0,\tau];\mathbb{R})$.
The space $\Hs[0,\tau]$ consists of those functions in $\Hso[0,\tau]$ that vanish at 
the endpoints $0$ and $\tau$.
The inner product on $\Ls[0,\tau]$ 
is 
\[
\langle f, g \rangle_{\Ls} = 
\int_0^\tau f(t) g(t)dt
\]
for $f,g \in \Ls[0,\tau]$, and 
the inner product on 
$\Hs[0,\tau]$
is 
\[
\langle \phi, \psi \rangle_{\Hs} = 
\int_0^\tau \phi'(t) \psi'(t)dt
\]
for $\phi,\psi \in \Hs[0,\tau]$.

Let $Y$ and $Z$ be Hilbert spaces.
We denote by $\mathcal{L}(Y,Z)$ the space of 
bounded linear operators from $Y$ to $Z$.
Let $T \in \mathcal{L}(Y,Z)$.
The range and kernel of $T$ are denoted by
$\Ran T$ and 
$\Ker T$, respectively.
The Hilbert space adjoint of $T$ is denoted by $T^*$.
The closure and the orthogonal complement of 
a subset $E$ of $Y$ are denoted by $\overline{E}$ and 
$E^{\perp}$, respectively.

\section{Preliminaries}
In this section, we introduce some preliminaries
for the proposed data-driven method.
First, we characterize the set of data-consistent systems 
in terms of synthesis operators.
Next, we derive integral representations for
products of synthesis operators and their adjoints.
These representations are crucial to
the numerical implementation of 
data-driven control based on synthesis operators.
Finally, we briefly review basic facts related to right inverses of
bounded operators.
\subsection{Data and synthesis operators}
\label{sec:Data_syn}
Fix $\tau >0$ and 
suppose that 
the state data $x \in \Hso([0,\tau]; \mathbb{R}^n)$ and the input data 
$u \in \Ls ([0,\tau]; \mathbb{R}^m)$ are available,
as in, for example, \cite{Song2025,Rapisarda2024,Ohta2024MTNS,Ohta2024,Wang2025,Bosso2025,Possieri2025}. 
These data are denoted by $(x,u)$.
To simplify the notation, we define
the data set $\Gamma_{\tau}$ and the system set 
$\Sigma_{n,m}$ by
\begin{align*}
	\Gamma_\tau &\coloneqq 
	\{
	(x,u): x \in \Hso([0,\tau]; \mathbb{R}^n)
	\text{~and~}
	u \in \Ls([0,\tau]; \mathbb{R}^m)
	\}, \\
	\Sigma_{n,m} &\coloneqq \{ 
	(A,B) : A \in \mathbb{R}^{n \times n}\text{~and~}
	B \in \mathbb{R}^{n \times m}
	\}.
\end{align*}
We assume that the data
$(x,u) \in \Gamma_{\tau}$ are generated by
some continuous-time linear system.
\begin{assumption}
	\label{assump:true_sys}
	There exists a system
	$(A_s,B_s) \in \Sigma_{n,m}$ such that
	for a.e.~$t \in [0,\tau]$,
	\begin{equation}
		\label{eq:true_system}
		x'(t) = A_sx(t) + B_su(t). 
	\end{equation}
\end{assumption}

The system 
$(A_s,B_s)$ can be regarded as
the true system generating the data $(x,u)$.
This paper considers the situation where 
the true system $(A_s,B_s)$ 
in Assumption~\ref{assump:true_sys} is unknown.

In the proposed approach, 
we embed the data into bounded linear operators on
$\Hs  [0,\tau]$.
These operators serve as continuous-time 
counterparts to
the matrices whose columns are the data vectors
in the discrete-time setting.
\begin{definition}
	\label{def:synthesis_op}
	Let $f \in \Ls ([0,\tau]; \mathbb{R}^n)$.
	Define
	the operators $F,F_{\sd} \in \mathcal{L}(\Hs  [0,\tau], \mathbb{R}^n)$  by
	\begin{align*}
		F\phi \coloneqq \int_0^\tau  \phi(t)f(t) dt
		\quad \text{and} \quad 
		F_{\sd}\phi  \coloneqq -\int_0^{\tau}  \phi '(t)f(t) dt 
	\end{align*}
	for $\phi  \in \Hs [0,\tau]$.
	We call $F$ the {\em synthesis operator associated with $f$}
	and  $F_{\sd}$ the {\em differentiated 
		synthesis operator associated with $f$}.
\end{definition}

Given data $(x,u) \in \Gamma_{\tau}$,
we denote by $X$ and $U$ the synthesis operators 
associated with $x$
and $u$, respectively.
We also 
denote by $X_{\sd}$ the differentiated synthesis operator
associated with $x$.
We call $(X_{\sd},X,U)$ the {\em synthesis operator triple
	associated with $(x,u)$}.

For data $\mathfrak{D} = (x,u)\in \Gamma_{\tau}$, we 
define the set $\Sigma_{\mathfrak{D}}$ of systems by
\begin{equation*}
	\Sigma_{\mathfrak{D}} \coloneqq 
	\{
	(A,B) \in \Sigma_{n,m}:
	x' = Ax +Bu~\text{a.e.~on $[0,\tau]$}
	\}.
\end{equation*}
The set $\Sigma_{\mathfrak{D}}$ consists of systems consistent with 
the data $\mathfrak{D}$.
In the following lemma, we characterize 
$\Sigma_{\mathfrak{D}}$ in terms of the synthesis operator triple
associated with the data.
\begin{lemma}
	\label{lem:synthesis}
	Let $(X_{\sd},X,U)$ be the synthesis operator triple
	associated with $\mathfrak{D} = (x,u) \in \Gamma_{\tau}$. 
	Then for all
	$(A,B) \in \Sigma_{n,m}$, one has 
	$(A,B) \in \Sigma_{\mathfrak{D}}$ if and only if
	$X_{\sd} = AX + BU$.
\end{lemma}
\begin{proof}
	Let $(A,B) \in \Sigma_{n,m}$
	and $\phi  \in \Hs [0,\tau]$.
	By integration by parts,
	$
	X_{\sd} \phi  = AX \phi  + BU \phi 
	$
	if and only if
	\[
	\int_0^\tau \phi (t) \big(x'(t) - Ax(t)- Bu(t) \big) dt = 0.
	\]
	The assertion follows
	from a standard property of test functions
	(see, e.g., \cite[Proposition~13.2.2]{Tucsnak2009}).
\end{proof}
\subsection{Products of synthesis operators and their adjoints}
Since the synthesis operators 
$F$ and $F_{\sd}$ introduced in 
Definition~\ref{def:synthesis_op} are of  finite rank,
products such as 
$F_{\sd}F_{\sd}^*$ and 
$F_{\sd} F^*$ can be identified with finite-dimensional matrices.
The proposed data-driven method
exploits this finite-dimensional property.
An important advantage of this approach is that
the resulting analysis and design procedures reduce to matrix-based computations despite the
underlying operator-theoretic framework.
The following lemma provides explicit formulae 
for products of synthesis operators and their adjoints, which are essential for
our synthesis-operator approach.
\begin{lemma}
	\label{lem:adjoint}
	Let $F$ and $F_{\sd}$ be
	the synthesis operator
	and 
	the differentiated 
	synthesis operator 
	associated with $f \in \Ls ([0,\tau];\mathbb{R}^n)$,
	respectively. 
	Define $\kappa\colon [0,\tau] \times [0,\tau] \to \mathbb{R}$ by
	\[
	\kappa(t,s) \coloneqq \begin{dcases}
		\frac{t(\tau-s)}{\tau},&t \leq s, \\
		\frac{s(\tau-t)}{\tau},&t > s.
	\end{dcases}
	\]
	Then
	the following statements hold:
	\begin{enumerate}
		\renewcommand{\labelenumi}{\textup{\alph{enumi})}}
		\item 
		The adjoint operators 
		$F^*,F_{\sd}^* \in \mathcal{L}(
		\mathbb{R}^n,\Hs [0,\tau]
		)$
		are given by
		\begin{align}
			\label{eq:F_ad}
			(F^* v)(t) &=
			\left(
			\int_0^\tau \kappa(t,s) f(s)  ds \right)^{\top}v,\\
			\label{eq:Fd_ad}
			(F_{\sd}^* v)(t) &=
			\left(  -\int_0^t f(s)  ds +
			\frac{t}{\tau}  \int_0^\tau f(s) ds \right)^{\top } v
		\end{align}
		for all $v \in \mathbb{R}^n$ and $t \in [0,\tau]$.
		\item The matrix 
		$F_{\sd}F_{\sd}^* \in \mathbb{R}^{n \times n}$ 
		is given by
		\begin{equation}
			\label{eq:Fsd_ad}
			F_{\sd} F_{\sd}^* = 
			\int_0^\tau f(t)f(t)^{\top} dt - 
			\frac{1}{\tau} \int_0^\tau f(t)dt
			\int_0^\tau f(t)^{\top}dt.
		\end{equation}
		\item
		Let $G$ be the synthesis operator
		associated with $g \in \Ls ([0,\tau];\mathbb{R}^m)$.
		Then the matrices 
		$GF^*, GF_{\sd}^* \in \mathbb{R}^{m \times n}$ 
		are given by
		\begin{align}
			\label{eq:GF_ad}
			GF^* &= 
			\int_0^\tau \int_0^\tau \kappa(t,s)g(t)
			f(s)^{\top} dsdt,\\
			\label{eq:GFd_ad}
			GF_{\sd}^* &= -
			\int_0^\tau \int_0^\tau \frac{\partial \kappa}{\partial s}(t,s)g(t)
			f(s)^{\top} dsdt.
		\end{align}
	\end{enumerate}
\end{lemma}
\begin{proof}
	a)
	Let $\phi \in \Hs [0,\tau]$ and $v \in \mathbb{R}^n$.
	Integration by parts yields
	\begin{equation*}
		\langle F\phi, v \rangle_{\mathbb{R}^n} = -
		\int_0^\tau \phi'(t) \int_0^t f(s)^{\top} vds dt,
	\end{equation*}
	where $\langle \cdot,\cdot\rangle_{\mathbb{R}^n}$
	denotes the standard inner product on
	the Euclidean space $\mathbb{R}^n$.
	By the definition of 
	adjoints, we also have
	\begin{equation*}
		\langle F\phi, v \rangle_{\mathbb{R}^n}  = 
		\langle \phi, F^*v \rangle_{\Hs}  =
		\int_0^\tau \phi'(t) (F^*v)'(t) dt.
	\end{equation*}
	Therefore,
	\begin{equation*}
		\int_0^\tau \phi'(t) 
		\left( (F^*v)'(t) + 
		\int_0^t f(s)^{\top} vds
		\right)
		dt =0 .
	\end{equation*}
	Since 
	\begin{align*}
		&\{
		\phi' : \phi \in \Hs [0,\tau]
		\}^{\perp} \\
		&\quad =
		\{
		\psi \in \Ls [0,\tau] : \psi(t) \equiv C
		\text{~for some $C \in \mathbb{R}$}
		\},
	\end{align*}
	there exists $C \in \mathbb{R}$
	such that
	\begin{equation}
		\label{eq:F_ad_with_C}
		(F^*v)'(t) = -
		\int_0^t f(s)^{\top} vds +C
	\end{equation}
	for a.e.~$t \in [0,\tau]$.
	The fundamental theorem of calculus and 
	Fubini's theorem show that 
	\begin{align*}
		(F^*v)(t) - (F^*v)(0) 
		&=
		-\int_0^t (t-s)f(s)^{\top}v ds + Ct
	\end{align*}
	for all $t \in [0,\tau]$.
	Recalling that $F^*v \in \Hs [0,\tau]$, we obtain
	\[
	C = 
	\frac{1}{\tau}\int_0^\tau (\tau-s)f(s)^{\top} vds.
	\]
	Since
	\[
	-(t-s)\chi_{[0,t]}(s) + \frac{t(\tau-s)}{\tau} = \kappa(t,s)
	\]
	for all $t,s \in [0,\tau]$,
	where $\chi_{[0,t]}$ denotes the characteristic function
	of the interval $[0,t]$,
	we conclude that
	\eqref{eq:F_ad} holds for all $t \in [0,\tau]$.
	The same argument shows that
	\begin{equation}
		\label{eq:Fd_deriv}
		(F_{\sd}^*v)'(t) = -
		f(t)^{\top} v +
		\frac{1}{\tau}
		\int_0^\tau f(s)^{\top} vds
	\end{equation}
	for a.e.~$t \in [0,\tau]$.
	Thus, \eqref{eq:Fd_ad} also holds for all $t \in [0,\tau]$.
	
	b)
	By \eqref{eq:Fd_deriv}, we obtain
	\begin{align*}
		F_{\sd} F_{\sd}^* v 
		&=
		-\int_0^\tau 
		\left(
		-f(t)^{\top}v +
		\frac{1}{\tau }
		\int_0^\tau f(s)^{\top} v ds
		\right)  f(t)dt 
	\end{align*}
	for all $v \in \mathbb{R}^n$.
	Therefore, \eqref{eq:Fsd_ad} holds.
	
	c)
	The first formula \eqref{eq:GF_ad} follows immediately
	from \eqref{eq:F_ad}. To prove
	the second formula \eqref{eq:GFd_ad},
	observe first that
	by \eqref{eq:Fd_ad},
	\begin{equation}
		\label{eq:FGv}
		GF_{\sd}^* v 
		=
		\int_0^\tau 
		\left( - \int_0^t f(s)^{\top } v ds +
		\frac{t}{\tau}  \int_0^\tau f(s)^{\top }v ds \right)  g(t)dt
	\end{equation}
	for all $v \in \mathbb{R}^n$.
	Using the characteristic function
	$\chi_{[0,t]}$, we obtain
	\[
	-\chi_{[0,t]}(s) + \frac{t}{\tau} = -\frac{\partial \kappa}{\partial s}(t,s)
	\]
	for all $t,s \in [0,\tau]$ with $t\neq s$.
	Hence, the right-hand integral of \eqref{eq:FGv}
	can be written as 
	\begin{equation*}
		- \left(\int_0^\tau 
		\int_0^\tau  \frac{\partial \kappa}{\partial s}(t,s) g(t)f(s)^{\top} ds
		dt \right) v.
	\end{equation*}
	Thus,
	we derive \eqref{eq:GFd_ad}.
\end{proof}

Note that by Lemma~\ref{lem:adjoint}.c),
we can also compute the matrix $F_{\sd} G^*$ by
using the relation $F_{\sd} G^* = (G F_{\sd}^*)^{\top}$.
In the synthesis-operator approach, the state-input data are mainly used to compute
the double integrals in
\eqref{eq:Fsd_ad}--\eqref{eq:GFd_ad}.
Even when only sampled data are available, the proposed method can still be approximately
implemented, provided that these integrals are accurately evaluated by numerical quadrature.

\begin{remark}
	Let $F$ and $F_{\sd}$ be the synthesis
	operator and the differentiated synthesis operator
	associated with the given data, respectively.
	In the modulating function
	method \cite{Shinbrot1954} for system identification,
	the focus is typically on selecting a specific sequence $(\phi_n)_{n \in \mathbb{N}}$
	of modulating functions
	in
	$\Hs[0,\tau]$, from which 
	the vectors $F\phi_n$ and $F_{\sd} \phi_n$ are formed;
	see also the survey~\cite{Preisig1993}.
	However, using only finitely many such vectors
	may fail to fully capture the information contained in the original data.
	In contrast, 
	the proposed data-driven approach directly treats
	synthesis operators themselves.
	The key to bridging this infinite-dimensional 
	framework with
	finite-dimensional computation lies in 
	the integral representations \eqref{eq:Fsd_ad}--\eqref{eq:GFd_ad} for
	products of synthesis operators and their adjoints. 
\end{remark}

\subsection{Right inverses}
\label{sec:right_inverse}
We conclude these preliminaries by 
recalling the definition and a parameterization of
right inverses.
Let $Y$ and $Z$ be Hilbert spaces.
\begin{definition}
	An operator $S \in \mathcal{L}(Z,Y)$ is called 
	a {\em right inverse of $T \in \mathcal{L}(Y,Z)$} if $TS = I$.
\end{definition}

By definition, the condition 
$\Ran T = Z$ is necessary for 
$T \in \mathcal{L}(Y,Z)$
to have a right inverse.
This surjectivity condition
is equivalent to the invertibility of $TT^*$;
see, e.g., \cite[Proposition~12.1.3]{Tucsnak2009}
for the proof.
\begin{lemma}
	\label{lem:surjectivity}
	Let $T \in \mathcal{L}(Y,Z)$. Then
	$\Ran T = Z$ if and only if $TT^*$ is invertible
	in $\mathcal{L}(Z)$.
\end{lemma}

This equivalence is computationally 
useful in our setting, since
products of synthesis operators and their 
adjoints can be identified with matrices, and their integral representations have been
obtained in Lemma~\ref{lem:adjoint}.

Conversely,
suppose that $T \in \mathcal{L}(Y,Z)$ is surjective. Then
there exists a right inverse of $T$, and 
every right inverse $S$  can be parameterized as
\begin{equation}
	\label{eq:right_inv_rep}
	S = T^*(TT^*)^{-1} + (I - T^*(TT^*)^{-1}T)S_0 \eqqcolon E(S_0)
\end{equation}
for some $S_0 \in \mathcal{L}(Z,Y)$.
Indeed, a direct calculation shows that 
$TE(S_0)= I$ for any $S_0 \in \mathcal{L}(Z,Y)$.
Moreover, if $S \in \mathcal{L}(Z,Y)$
satisfies $TS = I$, then $S$ can be written as 
$S = E(S)$.

\section{Data informativity for system identification}
\label{sec:identification}
In this section, we investigate the following data property
for system identification, 
following the discrete-time definition provided in
\cite[Definition~5]{Waarde2020}.
\begin{definition}
	Under
	Assumption~\ref{assump:true_sys},
	the data $\mathfrak{D} = (x,u)\in \Gamma_{\tau}$ are called {\em informative for
		system identification} if 
	$\Sigma_{\mathfrak{D}} = 
	\{(A_s,B_s) \}$.
\end{definition}

The following result
provides a characterization of  informativity for system identification, which 
is a continuous-time analogue of 
\cite[Proposition~6]{Waarde2020}.
Note that the matrices in statement~(iii) and \eqref{eq:sys_identification}
can be computed using  Lemma~\ref{lem:adjoint}.
\begin{proposition}
	\label{prop:identification}
	Suppose that the data $(x,u) \in \Gamma_{\tau}$ satisfy 
	Assumption~\ref{assump:true_sys}.
	Let $(X_{\sd},X,U)$ be the synthesis operator triple
	associated with $(x,u)$. 
	Then
	the following statements are equivalent:
	\begin{enumerate}
		\renewcommand{\labelenumi}{\textup{(\roman{enumi})}}
		\item The data $(x,u)$ are informative for
		system identification. \vspace{3pt}
		\item $\Ran \begin{bmatrix}
			X \\ U
		\end{bmatrix} = \mathbb{R}^{n + m}$. \vspace{3pt}
		\item 
		The matrix $\begin{bmatrix}
			X X^* & X U^* \\
			U X^* & U U^*
		\end{bmatrix} \in \mathbb{R}^{(n+m) \times (n+m)}$ is invertible. \vspace{3pt}
	\end{enumerate}
	Furthermore, if statement~(iii) holds, then
	the true system $(A_s,B_s)$ is given by
	\begin{equation}
		\label{eq:sys_identification}
		\begin{bmatrix}
			A_s & B_s
		\end{bmatrix} =
		\begin{bmatrix}
			X_{\sd} X^* & X_{\sd} U^* \\
		\end{bmatrix}
		\begin{bmatrix}
			X X^* & X U^* \\
			U X^* & U U^*
		\end{bmatrix}^{-1}.
	\end{equation}
\end{proposition}

\begin{proof}
	To simplify the notation, we write
	\[
	\mathfrak{D} \coloneqq (x,u) \quad \text{and} \quad
	T \coloneqq 
	\begin{bmatrix}
		X \\ U
	\end{bmatrix}.
	\]
	First, we prove the implication (i) $\Rightarrow $ (ii).
	Assume, for contradiction, that 
	$\Ran T \neq \mathbb{R}^{n + m}$.
	Then there exist $\xi \in \mathbb{R}^n$
	and $\upsilon \in \mathbb{R}^m$ such that 
	\[
	\begin{bmatrix}
		\xi \\ \upsilon 
	\end{bmatrix} \in 
	\left(\Ran T\right)^{\perp} \setminus \{ 0\}.
	\]
	Let $\zeta \in \mathbb{R}^n \setminus \{ 0\}$ be
	arbitrary, and define
	$(A_0,B_0) \in \Sigma_{n,m}$ by
	\begin{equation}
		\label{eq:A0B0_def}
		A_0 \coloneqq \zeta \xi^{\top}
		\quad \text{and} \quad 
		B_0 \coloneqq \zeta \upsilon^{\top}.
	\end{equation}
	Then $(A_s+A_0,B_s+B_0) \neq (A_s,B_s)$.
	Since
	\[
	A_0X \phi + B_0 U \phi =
	\zeta \begin{bmatrix}
		\xi^{\top} & \upsilon^{\top}
	\end{bmatrix}
	T \phi 
	=0
	\]
	for all $\phi \in \Hs[0,\tau]$,
	we obtain 
	$(A_s+A_0,B_s+B_0) \in \Sigma_{\mathfrak{D}}$ 
	by
	Lemma~\ref{lem:synthesis}.
	This is a contradiction.

	Next, we prove the implication (ii) $\Rightarrow $ (i).
	Let $(A,B) \in \Sigma_{\mathfrak{D}}$.
	Since $T$ is surjective,
	there exists a right inverse 
	$S$ 
	of $T$.
	Therefore, Lemma~\ref{lem:synthesis} shows that
	\begin{equation}
		\label{eq:AsBs_rep}
		\begin{bmatrix}
			A & B
		\end{bmatrix}
		=
		\begin{bmatrix}
			A & B
		\end{bmatrix}
		TS =
		X_{\sd} S.
	\end{equation}
	This implies that $(A,B)$ is uniquely determined by the 
	data $\mathfrak{D} $.
	Hence,
	$\mathfrak{D} $ are informative for
	system identification.
	
	The equivalence of
	(ii) and (iii) is an straightforward consequence of
	Lemma~\ref{lem:surjectivity}.
	The assertion 
	\eqref{eq:sys_identification}
	follows from \eqref{eq:right_inv_rep} with $S_0 = 0$ and \eqref{eq:AsBs_rep}.
\end{proof}

\section{Data informativity for stabilization}
\label{sec:stabilization}
We introduce a data property ensuring that all data-consistent systems can be stabilized by a common feedback gain. This property was originally
introduced for
discrete-time systems in \cite[Definition~12]{Waarde2020}.
\begin{definition}
	\label{def:informative_stabilization}
	The data $\mathfrak{D} = (x,u) \in \Gamma_{\tau}$ are called {\em informative for
		stabilization} if 
	there exists $K \in \mathbb{R}^{m \times n}$ such that
	$A+BK$ is Hurwitz for all
	$(A,B) \in \Sigma_{\mathfrak{D}}$.
\end{definition}

The aim of this section is to characterize informativity for stabilization.
\begin{theorem}
	\label{thm:informativity_stabilization}
	Suppose that the data $(x,u) \in \Gamma_{\tau}$ satisfy 
	Assumption~\ref{assump:true_sys}.
	Let $(X_{\sd},X,U)$ be the synthesis operator triple
	associated with $(x,u)$. 
	Then the following statements are equivalent:
	\begin{enumerate}
		\renewcommand{\labelenumi}{\textup{(\roman{enumi})}}
		\item The data $(x,u)$ are informative for
		stabilization.
		\item $\Ran X = \mathbb{R}^n$, 
		and there exists 
		a right inverse $S$
		of $X$  such that the matrix
		$X_{\sd} S
		\in \mathbb{R}^{n \times n}$ is Hurwitz. 
		\item 
		The matrix 
		$X X^* \in \mathbb{R}^{n\times n}$
		is invertible, and there exists a matrix $H \in \mathbb{R}^{n\times n}$ such that the matrix
		$F+GH$ is Hurwitz, where $F,G \in \mathbb{R}^{n\times n}$ are defined by
		\begin{align*}
			F &\coloneqq X_{\sd} X^* (X X^*)^{-1}, \\
			G &\coloneqq X_{\sd} X_{\sd}^* - 
			X_{\sd} X^* (X X^*)^{-1} X X_{\sd}^*.
		\end{align*}
	\end{enumerate}
	Furthermore, if statement (iii) holds, then the feedback gain
	\begin{equation}
		\label{eq:stabilizing_gain}
		K \coloneqq U X^* (X X^*)^{-1}
		+ (U X_{\sd}^* - U X^*(X X^*)^{-1} X X_{\sd}^*) H
	\end{equation}
	is such that $A+BK$ is Hurwitz for all $(A,B) \in \Sigma_{\mathfrak{D}}$.
\end{theorem}

The discrete-time counterpart of 
the equivalence of (i) and (ii) 
was established in
\cite[Theorem~16]{Waarde2020}.
In the continuous-time setting, we have
introduced the additional statement (iii) and 
the related controller design \eqref{eq:stabilizing_gain}, where
all the matrices are explicitly obtained by applying Lemma~\ref{lem:adjoint}.
An LMI-based characterization
of informativity for stabilization 
will also be provided; see
Proposition~\ref{prop:S_vs_QS} 
and Theorem~\ref{thm:noisy_case}
for details.

The following lemma is a continuous-time analogue of 
\cite[Lemma~15]{Waarde2020}; see also
\cite[Lemma~1]{Rapisarda2024}.
This result will be used in the proof of 
the equivalence of  (i) and (ii) in Theorem~\ref{thm:informativity_stabilization}.
For a square matrix $M$, 
we denote by $\smax(M)$ and $\smin(M)$
the maximum and minimum of the real
parts of the eigenvalues of $M$, respectively.
\begin{lemma}
	\label{lem:Ran_cond}
	Suppose that the data $\mathfrak{D} = (x,u) \in \Gamma_{\tau}$ satisfy Assumption~\ref{assump:true_sys}.
	Let $X$ and $U$ be the synthesis operators
	associated with $x$ and $u$, respectively.
	If $K \in \mathbb{R}^{m \times n}$ 
	satisfies 
	\begin{equation}
		\label{eq:A_BK_spb}
		\sup_{(A,B) \in \Sigma_{\mathfrak{D}}}\smax(A+BK) < \infty,
	\end{equation}
	then 
	\[
	\Ran 
	\begin{bmatrix}
		I_n \\ K
	\end{bmatrix}
	\subseteq 
	\Ran 
	\begin{bmatrix}
		X \\ U
	\end{bmatrix}.
	\]
\end{lemma}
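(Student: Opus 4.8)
The plan is to argue by contradiction, producing a rank-one perturbation of the true system $(A_s,B_s)$ that is invisible to the data (so it stays in $\Sigma_{\mathfrak{D}}$) yet is detected by the feedback $\begin{bmatrix} I_n \\ K\end{bmatrix}$, and then scaling it to drive the spectral abscissa to $+\infty$, contradicting \eqref{eq:A_BK_spb}. First I would recast the target inclusion in terms of orthogonal complements. Writing $V \coloneqq \begin{bmatrix} \Xi \\ \Upsilon \end{bmatrix}$ and $W \coloneqq \begin{bmatrix} I_n \\ K \end{bmatrix}$, the range $\Ran V$ is a finite-dimensional, hence closed, subspace of $\mathbb{R}^{n+m}$, so $\Ran W \subseteq \Ran V$ is equivalent to $(\Ran V)^{\perp} \subseteq (\Ran W)^{\perp}$. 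If this were to fail, there would exist $(\xi,\upsilon) \in (\Ran V)^{\perp}$ with $W^{\top}\begin{bmatrix} \xi \\ \upsilon \end{bmatrix} = \xi + K^{\top}\upsilon \neq 0$; I set $w \coloneqq \xi + K^{\top}\upsilon$.

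Next I would build the perturbation. For a scalar $c>0$, put $A_0 \coloneqq c\,w\,\xi^{\top} \in \mathbb{R}^{n\times n}$ and $B_0 \coloneqq c\,w\,\upsilon^{\top} \in \mathbb{R}^{n\times m}$. For every $\phi \in \Hs[0,\tau]$,
\[
A_0 \Xi \phi + B_0 \Upsilon \phi
= c\,w\big(\xi^{\top}\Xi\phi + \upsilon^{\top}\Upsilon\phi\big)
= c\,w\left\langle \begin{bmatrix}\xi\\\upsilon\end{bmatrix}, V\phi\right\rangle_{\mathbb{R}^{n+m}} = 0,
\]
since $(\xi,\upsilon)\perp \Ran V$. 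Thus $A_0\Xi + B_0\Upsilon = 0$, and Lemma~\ref{lem:synthesis} gives $(A_s+A_0,\,B_s+B_0) \in \Sigma_{\mathfrak{D}}$ for every $c>0$.

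Then I would compute the closed-loop matrix and extract a lower bound on its spectral abscissa. Using $\xi^{\top}+\upsilon^{\top}K = (\xi+K^{\top}\upsilon)^{\top} = w^{\top}$,
\[
(A_s+A_0)+(B_s+B_0)K = (A_s+B_sK) + c\,w\,w^{\top}.
\]
Because the eigenvalues of a real matrix occur in conjugate pairs and sum to the trace, the sum of the real parts of the eigenvalues equals the trace; averaging over the $n$ eigenvalues yields $\smax(M) \geq \tfrac{1}{n}\operatorname{trace}(M)$ for every $M \in \mathbb{R}^{n\times n}$. Applying this and $\operatorname{trace}(w\,w^{\top}) = \|w\|^2 > 0$ gives
\[
\smax\big((A_s+A_0)+(B_s+B_0)K\big) \geq \tfrac{1}{n}\big(\operatorname{trace}(A_s+B_sK) + c\,\|w\|^2\big) \xrightarrow[c\to\infty]{} +\infty,
\]
which contradicts \eqref{eq:A_BK_spb} and completes the argument.

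The step I expect to require the most care is not any single computation but the choice of how to detect instability: tracking the motion of individual eigenvalues of the rank-one perturbation is delicate, whereas the trace identity $\sum_i \operatorname{Re}\lambda_i = \operatorname{trace}$ reduces the blow-up to a one-line estimate. The other point deserving attention is verifying that the perturbation genuinely lies in $\Sigma_{\mathfrak{D}}$, which is precisely where orthogonality to $\Ran V$ and Lemma~\ref{lem:synthesis} enter; aligning the left factor of $A_0,B_0$ with $w$ is exactly what makes $\operatorname{trace}(w\,w^{\top})$ strictly positive and forces the contradiction.
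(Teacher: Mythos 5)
Your proof is correct, but it takes a genuinely different route from the paper's. Both arguments hinge on the same two ingredients --- rank-one perturbations $(A_0,B_0)$ that annihilate the data (i.e.\ $A_0\Xi + B_0\Upsilon = 0$, so that Lemma~\ref{lem:synthesis} keeps the perturbed system inside $\Sigma_{\mathfrak{D}}$) and a scaling parameter sent to infinity --- but the instability-detection mechanisms differ. The paper first proves the \emph{universal} statement \eqref{eq:A0B0_prop}, that $A_0+B_0K=0$ for \emph{every} $(A_0,B_0)$ in the auxiliary set $\Sigma_{\mathfrak{D}}^0$: it deduces from \eqref{eq:A_BK_spb} that $\smax(A_0+B_0K)\le 0$ and $\smin(A_0+B_0K)\ge 0$, so all eigenvalues lie on $i\mathbb{R}$, and then needs the additional trick of observing that $\big((A_0+B_0K)^{\top}A_0,(A_0+B_0K)^{\top}B_0\big)\in\Sigma_{\mathfrak{D}}^0$ to conclude that the symmetric matrix $(A_0+B_0K)^{\top}(A_0+B_0K)$ also has purely imaginary spectrum, hence vanishes. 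You instead exhibit a \emph{single} tailored perturbation: by aligning the left factor of $A_0,B_0$ with $w=\xi+K^{\top}\upsilon$, the closed-loop perturbation becomes the positive semidefinite matrix $c\,ww^{\top}$, and the elementary bound
\begin{equation*}
\smax(M)\;\ge\;\tfrac{1}{n}\operatorname{trace}(M),\qquad M\in\mathbb{R}^{n\times n},
\end{equation*}
(valid because the eigenvalues of a real matrix occur in conjugate pairs) makes the blow-up a one-line computation, with no need for the two-sided eigenvalue argument or the symmetrization trick. What each buys: the paper's route yields the reusable structural fact that the feedback $K$ is insensitive to \emph{all} data-invisible perturbations, which is conceptually aligned with how such lemmas are proved in the discrete-time literature; your route is shorter and more elementary, trading that general intermediate result for a contradiction witness whose closed-loop effect is symmetric by construction. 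One small presentational note: your argument uses $(A_s,B_s)\in\Sigma_{\mathfrak{D}}$ as the base point of the perturbation, so Assumption~\ref{assump:true_sys} is genuinely needed there, exactly as in the paper; it is worth stating this explicitly rather than leaving it implicit in the notation $(A_s+A_0,B_s+B_0)$.
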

\begin{proof}
	Define the set $\Sigma_{\mathfrak{D}}^0$ of systems by
	\[
	\Sigma_{\mathfrak{D}}^0 \coloneqq 
	\{
	(A_0,B_0) \in \Sigma_{n,m}  :
	0 = A_0x+B_0u~\text{a.e.~on $[0,\tau]$}
	\}.
	\]
	By the same argument as in
	Lemma~\ref{lem:synthesis}, we have
	\begin{equation}
		\label{eq:SigmaD0_equiv}
		(A_0,B_0) \in \Sigma_{\mathfrak{D}}^0 \quad 
		\Leftrightarrow \quad 
		0 = A_0X+B_0U.
	\end{equation}
	
	We will prove that 
	\begin{equation}
		\label{eq:A0B0_prop}
		A_0+B_0K = 0\quad
		\text{for all $(A_0,B_0) \in \Sigma_{\mathfrak{D}}^0$.}
	\end{equation}
	Let $(A,B) \in \Sigma_{\mathfrak{D}}$ and $(A_0,B_0) \in
	\Sigma_{\mathfrak{D}}^0$.
	Define 
	\[
	\Psi \coloneqq A+BK\quad \text{and} \quad 
	\Psi_0 \coloneqq 
	A_0+B_0K.
	\]
	For all $\rho \in \mathbb{R}$,
	\[
	(A+\rho A_0,B+\rho B_0) \in 
	\Sigma_{\mathfrak{D}}.
	\] 
	Hence, by \eqref{eq:A_BK_spb},
	there exists $C >0 $ such that 
	for all $\rho \in \mathbb{R}$,
	\[
	\smax(\Psi+\rho \Psi_0) \leq C.
	\]
	This gives
	\[
	\smax(\Psi/\rho+\Psi_0) \leq C / \rho
	\quad  \text{if $\rho >0$}.
	\]
	Letting $\rho \to \infty$, we obtain 
	$\smax (\Psi_0) \leq 0$.
	Similarly, since
	\[
	\smin(\Psi/\rho+\Psi_0) \geq C/\rho
	\quad  \text{if $\rho <0$},
	\]
	we obtain 
	$\smin (\Psi_0) \geq 0$.
	Hence, all eigenvalues of $\Psi_0$ lie on 
	the imaginary axis $i \mathbb{R}$.
	Noting that 
	\[
	(\Psi_0^{\top}A_0,\Psi_0^{\top}B_0) \in \Sigma_{\mathfrak{D}}^0,
	\] 
	we see that the eigenvalues of 
	$\Psi_0^{\top}\Psi_0$ are also on $i \mathbb{R}$.
	Since $\Psi_0^{\top}\Psi_0$  is symmetric,
	its spectral radius is zero, which implies that
	$\Psi_0 = 0$.
	Thus, \eqref{eq:A0B0_prop} is proved.

	Next, we will prove that 
	\begin{equation}
		\label{eq:ker_cond}
		\Ker 
		\begin{bmatrix}
			X^* & U^* 
		\end{bmatrix}
		\subseteq 
		\Ker 
		\begin{bmatrix}
			I_n & K^{\top} 
		\end{bmatrix}.
	\end{equation}
	Take
	\[
	\begin{bmatrix}
		\xi \\ \upsilon
	\end{bmatrix} \in 	\Ker 
	\begin{bmatrix}
		X^* & U^* 
	\end{bmatrix},
	\]
	and 
	let $\zeta \in \mathbb{R}^n \setminus \{ 0\}$.
	Using these vectors $\xi$, $\upsilon$, 
	and $\zeta$, we define
	$(A_0,B_0) \in \Sigma_{n,m}$ as in \eqref{eq:A0B0_def}.
	Since
	$
	X^* \xi + U^* \upsilon = 0,
	$
	it follows that for all $\phi \in \Hs[0,\tau]$,
	\[
	A_0 X \phi + B_0 U \phi  =
	\langle X^* \xi + U ^*\upsilon,  \phi \rangle_{\Hs}
	\zeta = 0. 
	\]
	Combining 
	this
	with \eqref{eq:SigmaD0_equiv} and \eqref{eq:A0B0_prop},
	we have $A_0+B_0K = 0$.
	Then 
	\[
	0=\zeta^{\top}(
	\zeta \xi^{\top} + \zeta \upsilon^{\top} K) =
	\|\zeta\|^2
	\left( 
	\begin{bmatrix}
		I_n & K^{\top}
	\end{bmatrix}
	\begin{bmatrix}
		\xi \\ \upsilon
	\end{bmatrix}
	\right)^{\top}.
	\]
	Since $\zeta \neq 0$, it follows that
	\[
	\begin{bmatrix}
		\xi \\ \upsilon
	\end{bmatrix} \in 	\Ker 
	\begin{bmatrix}
		I_n & K^{\top} 
	\end{bmatrix}.
	\]
	Hence, the inclusion \eqref{eq:ker_cond} holds.
	
	By \eqref{eq:ker_cond}, we have
	\begin{align*}
		\Ran 
		\begin{bmatrix}
			I_n \\ K
		\end{bmatrix} &=
		\left( \Ker \begin{bmatrix}
			I_n & K^{\top} 
		\end{bmatrix} \right)^{\perp} \subseteq 
		\left( 	\Ker 
		\begin{bmatrix}
			X^* & U^* 
		\end{bmatrix} \right)^{\perp} =
		\overline{
			\Ran 
			\begin{bmatrix}
				X \\ U
		\end{bmatrix}}.
	\end{align*}
	Since 
	$
	\begin{bmatrix}
		X \\ U
	\end{bmatrix}
	$
	is a finite-rank operator, its range is closed.
	Thus, the assertion is proved.
\end{proof}

We are now ready to prove the main result of this section.

\noindent\hspace{1em}{\textit{Proof of Theorem~\ref{thm:informativity_stabilization}:}}
Let $\mathfrak{D} \coloneqq (x,u)$.
First, we prove the implication (i) $\Rightarrow$ (ii).
Let $K \in \mathbb{R}^{m
	\times n}$ be such that
$A+BK$ is Hurwitz for all
$(A,B) \in \Sigma_{\mathfrak{D}}$.
Using  Lemma~\ref{lem:Ran_cond},
we obtain $\Ran X = \mathbb{R}^n$.
Moreover,
\[
\begin{bmatrix}
	I_n \\ K
\end{bmatrix}
=
\begin{bmatrix}
	X \\ U
\end{bmatrix}S
\]
for some 
$S \in \mathcal{L}(\mathbb{R}^n,\Hs [0,\tau])$
by Douglas' lemma; see \cite[Theorem~1]{Douglas1966}
and \cite[Proposition~12.1.2]{Tucsnak2009}.
Then
$S$ is a right inverse of $X$, and $K = U S$.
Since Lemma~\ref{lem:synthesis}
yields
$
X_{\sd} S  = A+BK
$
for all $(A,B) \in \Sigma_{\mathfrak{D}}$, 
it follows that $X_{\sd} S$ is also Hurwitz.

Next, we prove the implication (ii) $\Rightarrow$ (i).
Suppose that $X_{\sd}S$ 
is Hurwitz for some right inverse $S$ of $X$.
Define
$K \coloneqq U S$.
By Lemma~\ref{lem:synthesis},
we have
\begin{equation}
	\label{eq:ABK_Xid}
	A+BK = X_{\sd} S
\end{equation}
for all $(A,B) \in \Sigma_{\mathfrak{D}}$.
Therefore, the data $(x,u)$ are informative for stabilization.

For the proof of the equivalence of (ii) and (iii),
observe that
$\Ran X = \mathbb{R}^n$ if and only if
$X X^* $ 	
is invertible; see Lemma~\ref{lem:surjectivity}.
Therefore, it suffices to prove the equivalence of
the latter assertions under the assumption that 
$X X^*$ is invertible.
To this end, we 
define $\Pi \in \mathcal{L}(\Hs[0,\tau])$ by
\[
\Pi \coloneqq I - X^* (X X^*)^{-1} X.
\]
Then 
for all $S_0 \in \mathcal{L}(\mathbb{R}^n,\Hs[0,\tau])$, 
the right inverse $E(S_0)$ of $X$, 
defined as in \eqref{eq:right_inv_rep}, satisfies
\begin{equation}
	\label{eq:closed_loop_matrix_E}
	X_{\sd} E(S_0) = 
	F + X_{\sd} \Pi S_0.
\end{equation}

To show the implication (ii)
$\Rightarrow $ (iii), 
we suppose that there exists
a right inverse $S$
of $X$ such that
$X_{\sd} S$ is Hurwitz.
Set $S_0 \coloneqq S$. 
Since $S = E(S_0)$,
we deduce from \eqref{eq:closed_loop_matrix_E}
that
$F + X_{\sd} \Pi S_0$ is Hurwitz.
Using $\Pi = \Pi^*$ and $\Pi^2 = \Pi$,
we obtain
\[
\Ker (\Pi^* X_{\sd}^*) = \Ker (X_{\sd} \Pi \Pi^* X_{\sd}^*) = 
\Ker (X_{\sd} \Pi  X_{\sd}^*).
\]
Therefore, 
\begin{align}
	\overline{\Ran (X_{\sd} \Pi)} &= \big(\!\Ker (\Pi^* X_{\sd}^*) \big)^{\perp} = \big(\!\Ker (X_{\sd} \Pi  X_{\sd}^*)\big)^{\perp}  = \overline{\Ran (X_{\sd} \Pi  X_{\sd}^*)}.
	\label{eq:ranges_XiPi}
\end{align}
Since $X_{\sd}\Pi $ and $X_{\sd} \Pi  X_{\sd}^*$ are finite-rank
operators, their ranges are closed. This and \eqref{eq:ranges_XiPi} imply that
\begin{equation}
	\label{eq:Xi_Pi_range}
	\Ran (X_{\sd} \Pi) = \Ran (X_{\sd} \Pi X_{\sd}^*).
\end{equation}

For $k=1,\dots,n$, let $e_k$ be the $k$-th unit vector in $\mathbb{R}^n$.
By \eqref{eq:Xi_Pi_range},
for each $k=1,\dots,n$,
there exists $h_k \in \mathbb{R}^n$ such that 
\[
X_{\sd} \Pi S_0e_k = X_{\sd} \Pi X_{\sd}^* h_k.
\]
Define $H \in \mathbb{R}^{n\times n}$ by
$
H = \begin{bmatrix}
	h_1 & \cdots & h_n
\end{bmatrix}.
$
Then 
\[
X_{\sd }\Pi S_0 = X_{\sd} \Pi X_{\sd}^* H. 
\]
Since $X_{\sd} \Pi X_{\sd}^* = G$
by the definitions of $G$ and $\Pi$, we obtain
\begin{equation}
	\label{eq:FXiPiS}
	F + X_{\sd} \Pi S_0 = 
	F + GH.
\end{equation}
Therefore, $F+GH$
is Hurwitz.

Conversely, suppose that there exists
$H \in \mathbb{R}^{n \times n}$ such that
$F+GH$ is Hurwitz. Define
$S_0 \coloneqq X_{\sd}^* H$. From \eqref{eq:closed_loop_matrix_E} and
\eqref{eq:FXiPiS}, we deduce that 
$X_{\sd}S$ is Hurwitz
for the right inverse
$S = E(S_0)$. Moreover,
if we define $K \coloneqq U E(S_0)$, then
$A+BK$
is Hurwitz for all $(A,B) \in \Sigma_{\mathfrak{D}}$
by 
\eqref{eq:ABK_Xid}.
Since 
\[
U E(S_0) = U X^* (X X^*)^{-1}
+ (U X_{\sd}^* - U X^*(X X^*)^{-1} X X_{\sd}^*) H,
\]
the remaining assertion concerning the 
controller design follows.
\hspace*{\fill} $\blacksquare$

\section{Informativity of noisy data for quadratic stabilization}
\label{sec:stabilization_noise}

This section addresses the design of stabilizing gains based on
data corrupted by process noise.
First, we introduce the class of noise considered in this paper, 
and then
present a necessary and sufficient condition for noisy data to be informative for quadratic stabilization. Finally, we illustrate the result with a numerical example.

\subsection{Noisy data}
\label{sec:noisy_case}

In addition to the state-input data $(x,u) \in \Gamma_{\tau}$,
we consider the process noise $w \in \Ls([0,\tau];\mathbb{R}^n)$.
Throughout this section, 
we denote by $W$ the synthesis operator associated with 
$w$.
Let $(X_{\sd},X, U)$ 
be the synthesis operator triple associated with $(x,u)$.
By Lemma~\ref{lem:synthesis},
we obtain the following equivalence for
a fixed $(A,B) \in \Sigma_{n,m}$:
\begin{equation}
	\label{eq:equiv_noise}
	x'= Ax +Bu+w
	~\text{a.e.~on $[0,\tau]$}
	~~ \Leftrightarrow ~~
	X_{\sd} = AX + BU+W.
\end{equation}

By \eqref{eq:equiv_noise},
the effect of the noise $w$ on the data can be 
evaluated using the operator norm $\|W\|$ rather than
the signal norm $\|w\|_{\Ls}$.
For $c \geq 0$, noting that $\|W\| \leq \sqrt{c}$ is equivalent to
$WW^* \preceq c I_n$, 
we define 
the noise class $\Delta_{\tau,c}$ by
\[
\Delta_{\tau,c} \coloneqq \{
w \in \Ls([0,\tau];\mathbb{R}^n ): 
WW^*\preceq  c I_n 
\}.
\]
See
Section~\ref{sec:norm_adjoint}
for a discussion of the operator norm $\|W\|$.

We assume that 
the 
data $(x,u) \in \Gamma_{\tau}$ are generated 
by some continuous-time linear 
system subject to process noise in 
$\Delta_{\tau,c}$.
\begin{assumption}
	\label{assump:true_sys_noise}
	There exist a system
	$(A_s, B_s) \in  \Sigma_{n,m}$ and a noise signal 
	$w_s \in \Delta_{\tau,c}$ such that
	for a.e.~$t \in [0,\tau]$,
	\begin{equation}
		\label{eq:true_sys_noise}
		x'(t) = A_sx(t) + B_su(t) + w_s(t).
	\end{equation}
\end{assumption}

The state-input data $\mathfrak{D} = (x,u)$ and
the noise-intensity parameter $c$ 
are available, but
we consider the situation where
the system $(A_s,B_s)$ and the noise $w_s$ in
Assumption~\ref{assump:true_sys_noise} are unknown.
Given data $\mathfrak{D}$ and a constant $c \geq 0$,
we define the set $\Sigma_{\mathfrak{D},c}$ of systems by
\begin{align*}
	\Sigma_{\mathfrak{D},c} \coloneqq 
	\{
	(A,B) \in \Sigma_{n,m} : 
	\text{there exists $w \in \Delta_{\tau,c}$
		such that~} x' = Ax +Bu+w
	~~\text{a.e.~on $[0,\tau]$}
	\}.
\end{align*}
Systems in $\Sigma_{\mathfrak{D},c}$ are consistent with
the data $\mathfrak{D}$ corrupted by process noise
in $\Delta_{\tau,c}$.
We have
$(A_s,B_s) \in \Sigma_{\mathfrak{D},c}$
under Assumption~\ref{assump:true_sys_noise}.

We introduce the Lyapunov-based 
notion of informativity for stabilization as in the discrete-time case \cite[Definition~3]{Waarde2022TAC}.
\begin{definition}
	\label{def:informative_QS}
	Let $c \geq 0$.
	The data $\mathfrak{D} = (x,u)\in \Gamma_{\tau}$ are called {\em informative for
		quadratic stabilization under the noise class $\Delta_{\tau,c}$} if 
	there exist $P \in \mathbb{S}^n$ and 
	$K \in \mathbb{R}^{m \times n}$ 
	such that $P \succ 0$ and
	\begin{equation}
		\label{eq:Lyapunov_ineq}
		-(A+BK)P-P(A+BK)^{\top} \succ 0
	\end{equation}
	for all $(A,B) \in \Sigma_{\mathfrak{D},c}$.
	In the case $c=0$, 
	the data $\mathfrak{D}$ are simply called {\em informative for
		quadratic stabilization}.	
\end{definition}

In Definition~\ref{def:informative_QS},
the matrices $P$ and $K$ are required to be common
to all $(A,B) \in \Sigma_{\mathfrak{D},c}$.
In contrast, in the definition of informativity for stabilization
(Definition~\ref{def:informative_stabilization}),
although the feedback gain $K$ remains common, the
positive definite 
matrix $P$ satisfying the Lyapunov inequality \eqref{eq:Lyapunov_ineq} 
may depend on the system $(A,B)$.
However, the following proposition shows that 
these informativity properties
are equivalent in the noise-free case.
\begin{proposition}
	\label{prop:S_vs_QS}
	Suppose that the data $(x,u)\in \Gamma_{\tau}$ satisfy 
	Assumption~\ref{assump:true_sys}.
	Then $(x,u)$ are informative for
	stabilization if and only if 
	$(x,u)$ are informative for
	quadratic stabilization.
\end{proposition}
\begin{proof}
	Suppose that 
	$\mathfrak{D} = (x,u)$ are informative for 
	stabilization.
	By Theorem~\ref{thm:informativity_stabilization},
	there exists
	a right inverse $S$ of $X$ such that the matrix
	$X_{\sd} S$ is Hurwitz.
	There exists $P \in \mathbb{S}^n$ 
	such that $P \succ 0$ and
	\[
	-(X_{\sd} S)P-P(X_{\sd} S)^{\top} \succ 0.
	\]
	On the other hand,
	if we define $K \coloneqq U S$, then
	$X_{\sd} S = A+BK$ 
	for all $(A,B) \in \Sigma_{\mathfrak{D}}$ by
	Lemma~\ref{lem:synthesis}.
	Hence,
	$(x,u)$ are informative for
	quadratic stabilization.
	The converse implication follows 
	immediately from Definitions~\ref{def:informative_stabilization}
	and \ref{def:informative_QS}.
\end{proof}

\subsection{Characterization via the  matrix S-lemma}
We now characterize the informativity 
of noisy data.
This characterization is a continuous-time analogue of 
\cite[Theorem~5.1]{Waarde2023SIAM}.
A similar result, up to truncation errors,
was obtained 
for discrete sequences transformed from
continuous-time data via polynomial
orthogonal bases in
\cite[Theorem~5]{Rapisarda2024}.
The following theorem gives a necessary and sufficient 
LMI condition for informativity without approximation. 
In this theorem, 
a central role is again played by products 
of the synthesis operators and their adjoints, 
whose formulae are provided in Lemma~\ref{lem:adjoint}.
\begin{theorem}
	\label{thm:noisy_case}
	Suppose that the data $\mathfrak{D}=(x,u) \in \Gamma_{\tau}$ and
	the noise-intensity parameter $c \geq 0$ satisfy Assumption~\ref{assump:true_sys_noise}.
	Let $(X_{\sd},X,U)$ be the synthesis operator triple
	associated with $(x,u)$. 
	Then the following statements are equivalent:
	\begin{enumerate}
		\renewcommand{\labelenumi}{\textup{(\roman{enumi})}}
		\item The data $(x,u)$ are informative for
		quadratic stabilization under the noise class $\Delta_{\tau,c}$.
		\item There exist
		matrices 
		$P \in \mathbb{S}^{n}$, 
		$L \in \mathbb{R}^{m \times n}$ 
		and a scalar $\alpha \geq 0$ such that
		$P\succ 0$ and 
		\begin{equation}
			\label{eq:LMI_noisy_case}
			\begin{bmatrix}
				\alpha (X_{\sd}X_{\sd}^* - cI_n ) - I_n& 
				-P -\alpha X_{\sd}X^* & -L^{\top} - \alpha X_{\sd}U^* \\
				-P - \alpha XX_{\sd}^* &  \alpha X X^* & 
				\alpha X U^* \\
				-L -\alpha U X_{\sd}^* & 
				\alpha U X^* & 
				\alpha U U^*
			\end{bmatrix}  \succeq 0.
		\end{equation}
	\end{enumerate}
	Furthermore, 
	if statement (ii) holds,
	then 
	$K \coloneqq LP^{-1}$ is such that 
	$A+BK$ is Hurwitz for all 
	$(A,B) \in \Sigma_{\mathfrak{D},c}$.
\end{theorem}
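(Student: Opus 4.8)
The plan is to recast both the consistency set $\Sigma_{\mathfrak{D},c}$ and the quadratic-stabilization requirement as \emph{matrix} quadratic inequalities in the single unknown $\Theta \coloneqq \begin{bmatrix} I_n & A^{\top} & B^{\top}\end{bmatrix}^{\top}$, and then to invoke the matrix $S$-lemma of \cite{Waarde2023SIAM}. First I would use \eqref{eq:equiv_noise} to rewrite membership in $\Sigma_{\mathfrak{D},c}$: a pair $(A,B)$ is compatible precisely when the synthesis operator $W = \Xi_{\sd} - A\Xi - B\Upsilon$ of the residual $w = x' - Ax - Bu$ obeys $WW^{*}\preceq cI_n$. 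Since $\Xi,\Xi_{\sd},\Upsilon$ have finite rank, every product with an adjoint is a genuine matrix (Lemma~\ref{lem:adjoint}), so expanding $cI_n - WW^{*}$ as a quadratic form in $(A,B)$ gives
\[
(A,B)\in\Sigma_{\mathfrak{D},c} \iff \Theta^{\top} N \Theta \succeq 0, \qquad
N \coloneqq \begin{bmatrix}
 cI_n - \Xi_{\sd}\Xi_{\sd}^{*} & \Xi_{\sd}\Xi^{*} & \Xi_{\sd}\Upsilon^{*} \\
 \Xi\Xi_{\sd}^{*} & -\Xi\Xi^{*} & -\Xi\Upsilon^{*} \\
 \Upsilon\Xi_{\sd}^{*} & -\Upsilon\Xi^{*} & -\Upsilon\Upsilon^{*}
\end{bmatrix}.
\]
Writing $L = KP$, the closed-loop Lyapunov matrix of Definition~\ref{def:informative_QS} can likewise be written as $-\big[(A+BK)P + P(A+BK)^{\top}\big] = \Theta^{\top} M \Theta$ with
\[
M \coloneqq \begin{bmatrix}
 0 & -P & -L^{\top} \\
 -P & 0 & 0 \\
 -L & 0 & 0
\end{bmatrix},
\]
so that (i) is equivalent to the robust implication $\Theta^{\top} N \Theta \succeq 0 \Rightarrow \Theta^{\top} M \Theta \succ 0$ holding for all $\Theta$ of the stated form, for some $P\succ0$ and $L$.

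The implication (ii) $\Rightarrow$ (i) is the easy, $S$-procedure direction and needs no converse. I would observe that \eqref{eq:LMI_noisy_case} is, block by block, exactly $\mathcal{M} \coloneqq M - \alpha N - \mathrm{diag}(I_n,0,0) \succeq 0$. Given such $P\succ0$, $L$, and $\alpha\geq0$, any $(A,B)\in\Sigma_{\mathfrak{D},c}$ satisfies
\[
\Theta^{\top} M \Theta = \Theta^{\top}\mathcal{M}\Theta + \alpha\,\Theta^{\top} N \Theta + I_n \succeq I_n \succ 0,
\]
because $\mathcal{M}\succeq0$, $\alpha\geq0$, $\Theta^{\top}N\Theta\succeq0$, and the identity top block gives $\Theta^{\top}\mathrm{diag}(I_n,0,0)\Theta = I_n$. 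Hence $(A+BK)P + P(A+BK)^{\top}\prec0$ with $K = LP^{-1}$, which certifies that $A+BK$ is Hurwitz for every compatible $(A,B)$; this establishes (i) together with the final assertion of the theorem.

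The substantive direction is (i) $\Rightarrow$ (ii), where the multiplier $\alpha$ must be produced. I would first upgrade the strict robust inequality to a non-strict one: since $M$ has zero lower-right block, $\Theta^{\top}M\Theta$ is affine in $(A,B)$, and a standard margin argument then yields $\Theta^{\top}N\Theta\succeq0 \Rightarrow \Theta^{\top}M_{\varepsilon}\Theta\succeq0$ for $M_{\varepsilon}\coloneqq M - \varepsilon\,\mathrm{diag}(I_n,0,0)$ and some $\varepsilon>0$. To this non-strict implication I would apply the matrix $S$-lemma, obtaining $\alpha'\geq0$ with $M_{\varepsilon} - \alpha' N \succeq 0$, i.e.\ $M - \alpha' N \succeq \varepsilon\,\mathrm{diag}(I_n,0,0)$. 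Dividing by $\varepsilon$ and using that $M$ is linear in $(P,L)$, the rescaled triple $(P/\varepsilon,\,L/\varepsilon,\,\alpha'/\varepsilon)$ satisfies \eqref{eq:LMI_noisy_case}, while the induced gain $LP^{-1}$ is unchanged.

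The main obstacle is verifying the hypotheses of the matrix $S$-lemma. The required sign condition on the lower-right block is automatic from the Gram structure, $\begin{bmatrix} -\Xi\Xi^{*} & -\Xi\Upsilon^{*} \\ -\Upsilon\Xi^{*} & -\Upsilon\Upsilon^{*}\end{bmatrix} = -\begin{bmatrix}\Xi\\\Upsilon\end{bmatrix}\begin{bmatrix}\Xi^{*} & \Upsilon^{*}\end{bmatrix}\preceq0$; this negative semidefiniteness is exactly the finite-rank feature that both reduces the continuous-time problem to a finite-dimensional LMI and controls $\Theta^{\top}M\Theta$ along unbounded admissible directions in the margin step. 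The delicate point is the Slater-type regularity condition, namely the existence of a strictly compatible system $(A_0,B_0)$ with $W_0 W_0^{*}\prec cI_n$; I would derive this from Assumption~\ref{assump:true_sys_noise} by perturbing the true pair $(A_s,B_s)$ inside the noise ball $\Delta_c[0,\tau]$. Getting this hypothesis and the strict/non-strict bridge right, rather than the algebra of $N$ and $M$, is where the real work lies.
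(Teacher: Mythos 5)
Your reduction coincides with the paper's: your $N$ is exactly the matrix $\mathcal{N}$ in \eqref{eq:N_def}, your compatibility equivalence is Lemma~\ref{lem:Z_Sigma_relation}, your $M$ is the matrix $\mathcal{M}$ in \eqref{eq:M_def}, and your proof of (ii) $\Rightarrow$ (i) --- identifying \eqref{eq:LMI_noisy_case} with $\mathcal{M}-\alpha\mathcal{N}-\mathrm{diag}(I_n,0,0)\succeq 0$ and reading off the Hurwitz property of $K=LP^{-1}$ --- is the paper's argument verbatim. The genuine gap is in (i) $\Rightarrow$ (ii), where you replace the paper's single application of Lemma~\ref{lem:S_lemma} by two steps: a ``standard margin argument'' and a \emph{non-strict} $S$-lemma under a classical (strict) Slater condition. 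Neither step holds in the generality of the theorem.

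First, the Slater condition you require --- a pair $(A_0,B_0)$ whose residual operator $W_0=\Xi_{\sd}-A_0\Xi-B_0\Upsilon$ satisfies $W_0W_0^*\prec cI_n$ --- cannot be obtained from Assumption~\ref{assump:true_sys_noise}. For $c=0$ it is outright impossible, since $W_0W_0^*\succeq 0$ always; yet the theorem is stated for $c\geq 0$, and its $c=0$ case is precisely what connects to Proposition~\ref{eq:S_vs_QS}. Even for $c>0$, perturbing $(A_s,B_s)$ need not help: if the true residual attains the bound and satisfies $\Xi W_s^*=0$ and $\Upsilon W_s^*=0$, then with $D\coloneqq (A_s-A_0)\Xi+(B_s-B_0)\Upsilon$ one gets $W_0W_0^*=W_sW_s^*+DD^*\succeq W_sW_s^*$, so strict feasibility is unreachable for every $(A_0,B_0)$. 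Second, the margin step is not routine: $\Sigma_{\mathfrak{D},c}$ is in general an unbounded convex set, and an affine matrix-valued map that is positive definite at every point of an unbounded convex set need not admit a uniform lower bound $\varepsilon I_n$ (pointwise strict positivity does not give a slack). Producing exactly this uniform slack without strict feasibility is the content of the strict matrix $S$-lemma, so invoking it as a preparatory step begs the question. The paper avoids both issues by applying Lemma~\ref{lem:S_lemma} directly to the strict inclusion $\mathcal{Z}_{n,n+m}(\mathcal{N})\subseteq\mathcal{Z}_{n,n+m}^+(\mathcal{M})$: the hypotheses \eqref{eq:S_lem_assump1} follow from $\mathcal{M}_{22}=0$ and the Gram structure of $\mathcal{N}_{22}$, and the \emph{generalized} (non-strict) Slater condition \eqref{eq:S_lem_assump2} follows from the mere existence of the true system, because $\begin{bmatrix}A_s & B_s\end{bmatrix}^{\top}\in\mathcal{Z}_{n,n+m}(\mathcal{N})$ together with the factorization of the quadratic form $Z\mapsto \begin{bmatrix}I_n & Z^{\top}\end{bmatrix}\mathcal{N}\begin{bmatrix}I_n & Z^{\top}\end{bmatrix}^{\top}$ from \cite[Fact~8.9.7]{Bernstein2018} yields $\mathcal{N}_{11}-\mathcal{N}_{12}\mathcal{N}_{22}^+\mathcal{N}_{12}^{\top}\succeq 0$. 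The lemma then delivers both $\alpha\geq 0$ and the slack $\beta>0$ in one stroke, after which your rescaling argument applies unchanged. As written, your proof establishes the theorem only under an additional strict-feasibility hypothesis that the theorem (and the noise model $\Delta_c[0,\tau]$) does not assume; the fix is to use the strict $S$-lemma with the generalized Slater condition, as in \cite{Waarde2023SIAM}.
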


In the proof 
of Theorem~\ref{thm:noisy_case},
we employ 
the {\em matrix $S$-lemma} \cite[Corollary~4.13]{Waarde2023SIAM}.
To state this lemma,
we consider $\mathcal{M}, \mathcal{N} \in 
\mathbb{S}^{q+r}$ 
partitioned as 
\begin{equation}
	\label{eq:M_N_decomp}
	\mathcal{M} =
	\begin{bmatrix}
		\mathcal{M}_{11} & \mathcal{M}_{12} \\
		\mathcal{M}_{12}^{\top} & \mathcal{M}_{22}
	\end{bmatrix}\quad \text{and} \quad 
	\mathcal{N} =
	\begin{bmatrix}
		\mathcal{N}_{11} & \mathcal{N}_{12} \\
		\mathcal{N}_{12}^{\top} & \mathcal{N}_{22}
	\end{bmatrix},
\end{equation}
where 
the $(1,1)$-blocks have size $q \times q$ and 
the $(2,2)$-blocks have size $r \times r$.
Define
the matrix sets $\mathcal{Z}_{q,r}(\mathcal{N})$
and $\mathcal{Z}_{q,r}^+(\mathcal{M})$
by
\begin{align}
	\label{eq:ZN_def}
	\mathcal{Z}_{q,r}(\mathcal{N}) &\coloneqq 
	\left\{
	Z \in \mathbb{R}^{r\times q} :
	\begin{bmatrix}
		I_q \\ Z
	\end{bmatrix}^{\top}
	\mathcal{N}
	\begin{bmatrix}
		I_q \\ Z
	\end{bmatrix} \succeq 0
	\right\}, \\
	\label{eq:ZM_def}
	\mathcal{Z}_{q,r}^+(\mathcal{M}) &\coloneqq 
	\left\{
	Z \in \mathbb{R}^{r\times q} :
	\begin{bmatrix}
		I_q \\ Z
	\end{bmatrix}^{\top}
	\mathcal{M}
	\begin{bmatrix}
		I_q \\ Z
	\end{bmatrix}\succ 0
	\right\}.
\end{align}
We now state the matrix $S$-lemma.
\begin{lemma}
	\label{lem:S_lemma}
	Let $\mathcal{M},\mathcal{N} \in \mathbb{S}^{q+r}$
	be 
	partitioned as in \eqref{eq:M_N_decomp}.
	Assume that 
	\begin{equation}
		\label{eq:S_lem_assump1}
		\mathcal{M}_{22} \preceq 0,\quad \mathcal{N}_{22} \preceq 0, \quad 
		\Ker \mathcal{N}_{22} \subseteq \Ker \mathcal{N}_{12},
	\end{equation}
	and that 
	\begin{equation}
		\label{eq:S_lem_assump2}
		\mathcal{N}_{11} -
		\mathcal{N}_{12}\mathcal{N}_{22}^+\mathcal{N}_{12}^{\top} \succeq 0.
	\end{equation}
	Then the following statements are equivalent:
	\begin{enumerate}
		\renewcommand{\labelenumi}{\textup{(\roman{enumi})}}
		\item $\mathcal{Z}_{q,r}(\mathcal{N}) \subseteq 
		\mathcal{Z}_{q,r}^+(\mathcal{M})$. 
		\item
		There exist scalars
		$\alpha \geq 0$ and $\beta >0$ such that
		\begin{equation}
			\label{eq:MN_ineq}
			\mathcal{M} - \alpha \mathcal{N} \succeq
			\begin{bmatrix}
				\beta I_q & 0 \\ 0 & 0
			\end{bmatrix}.
		\end{equation}
	\end{enumerate}
\end{lemma}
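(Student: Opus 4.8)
My plan is to treat the two implications separately, disposing of the routine direction (ii) $\Rightarrow$ (i) by a congruence transformation and concentrating the real work on the lossless $S$-procedure direction (i) $\Rightarrow$ (ii). For (ii) $\Rightarrow$ (i), I would assume \eqref{eq:MN_ineq} for some $\alpha \geq 0$ and $\beta > 0$, fix $Z \in \mathcal{Z}_{q,r}(\mathcal{N})$, and apply the congruence by $\begin{bmatrix} I_q \\ Z \end{bmatrix}$ to \eqref{eq:MN_ineq}, noting that the right-hand side $\mathrm{diag}(\beta I_q,0)$ transforms to $\beta I_q$:
\[
\begin{bmatrix} I_q \\ Z \end{bmatrix}^{\top} (\mathcal{M} - \alpha \mathcal{N}) \begin{bmatrix} I_q \\ Z \end{bmatrix} \succeq \beta I_q.
\]
Since $\alpha \geq 0$ and the subtracted term is nonnegative definite by the definition \eqref{eq:ZN_def} of $\mathcal{Z}_{q,r}(\mathcal{N})$, the block product of $\mathcal{M}$ is $\succeq \beta I_q \succ 0$, so $Z \in \mathcal{Z}_{q,r}^{+}(\mathcal{M})$ by \eqref{eq:ZM_def}. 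This direction uses neither \eqref{eq:S_lem_assump1} nor \eqref{eq:S_lem_assump2}.

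The substantive direction is (i) $\Rightarrow$ (ii), and here the first step is to extract a Slater point from \eqref{eq:S_lem_assump1}--\eqref{eq:S_lem_assump2}. Completing the square in $Z$ (the kernel inclusion in \eqref{eq:S_lem_assump1} is exactly what makes $\mathcal{N}_{22}\mathcal{N}_{22}^{+}\mathcal{N}_{12}^{\top} = \mathcal{N}_{12}^{\top}$, so that the identity is exact) yields
\[
\begin{bmatrix} I_q \\ Z \end{bmatrix}^{\top} \mathcal{N} \begin{bmatrix} I_q \\ Z \end{bmatrix} = \big( \mathcal{N}_{11} - \mathcal{N}_{12} \mathcal{N}_{22}^{+} \mathcal{N}_{12}^{\top} \big) + (Z - Z^{\star})^{\top} \mathcal{N}_{22} (Z - Z^{\star}),
\]
where $Z^{\star} \coloneqq -\mathcal{N}_{22}^{+} \mathcal{N}_{12}^{\top}$. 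Because $\mathcal{N}_{22} \preceq 0$, the second summand is nonpositive definite, so the matrix form is maximized at $Z = Z^{\star}$ with maximal value $\mathcal{N}_{11} - \mathcal{N}_{12} \mathcal{N}_{22}^{+} \mathcal{N}_{12}^{\top} \succeq 0$ by \eqref{eq:S_lem_assump2}. Hence $Z^{\star} \in \mathcal{Z}_{q,r}(\mathcal{N})$, and this furnishes the Slater regularity that I expect to be indispensable for losslessness.

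With the Slater point in hand, I would set up a convex-duality / theorem-of-the-alternatives argument. Condition (i) asserts that no $Z$ simultaneously lies in $\mathcal{Z}_{q,r}(\mathcal{N})$ and fails the strict inequality defining $\mathcal{Z}_{q,r}^{+}(\mathcal{M})$; I would encode this as the disjointness of two convex sets in $\mathbb{S}^{q+r}$ obtained by ranging the lifted matrices $\begin{bmatrix} I_q \\ Z \end{bmatrix} (\cdot) \begin{bmatrix} I_q \\ Z \end{bmatrix}^{\top}$ over admissible $Z$, and separate them by a hyperplane. The separating functional, represented by a symmetric matrix, must then be converted into a genuine scalar multiplier $\alpha \geq 0$ together with the strict margin $\beta > 0$; the key device I would reach for is a rank-one decomposition of the separating certificate (in the spirit of Sturm--Zhang) so that the matrix-valued constraint reduces to the classical vector $S$-lemma along each rank-one piece. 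The hypotheses $\mathcal{M}_{22} \preceq 0$ and $\mathcal{N}_{22} \preceq 0$ then control the bottom block and ensure that the right-hand side of \eqref{eq:MN_ineq} is the block-diagonal $\mathrm{diag}(\beta I_q,0)$ rather than a full positive-definite matrix.

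The main obstacle is precisely this losslessness: for matrix-valued (as opposed to vector-valued) quadratic constraints the natural Lagrangian relaxation is generally not tight, and it is only the structural assumptions \eqref{eq:S_lem_assump1}--\eqref{eq:S_lem_assump2} that restore exactness. Turning the separating hyperplane into a \emph{scalar} nonnegative multiplier $\alpha$, instead of a matrix multiplier, is the delicate point, and I would expect to lean on the rank-one decomposition (or, failing a fully self-contained derivation, on the matrix $S$-lemma of \cite{Waarde2023SIAM}) to close it; extracting the strict $\beta > 0$ should then follow from a routine perturbation of the strict side of (i).
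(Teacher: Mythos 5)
There is no proof in the paper to compare your attempt against: Lemma~\ref{lem:S_lemma} is stated as a known result (the matrix $S$-lemma of van Waarde et al.; cf.\ \cite{Waarde2023SIAM}) and used as a black box. Judged as a self-contained argument, your proposal gets the routine parts right but leaves the substance unproven. Concretely, your direction (ii) $\Rightarrow$ (i) is correct and complete: congruence of \eqref{eq:MN_ineq} by $\begin{bmatrix} I_q \\ Z \end{bmatrix}$ turns the right-hand side into $\beta I_q$, and $\alpha\ge 0$ together with $Z \in \mathcal{Z}_{q,r}(\mathcal{N})$ gives $Z \in \mathcal{Z}_{q,r}^+(\mathcal{M})$, with neither \eqref{eq:S_lem_assump1} nor \eqref{eq:S_lem_assump2} needed. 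Your completion-of-squares is also correct: $\Ker \mathcal{N}_{22} \subseteq \Ker \mathcal{N}_{12}$ yields $\mathcal{N}_{22}\mathcal{N}_{22}^{+}\mathcal{N}_{12}^{\top} = \mathcal{N}_{12}^{\top}$, hence $Z^{\star} = -\mathcal{N}_{22}^{+}\mathcal{N}_{12}^{\top}$ belongs to $\mathcal{Z}_{q,r}(\mathcal{N})$ by \eqref{eq:S_lem_assump2}, which is exactly the regularity the known result requires.

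The genuine gap is the whole of (i) $\Rightarrow$ (ii), which is the entire content of the lemma: losslessness of the $S$-procedure with a \emph{scalar} multiplier for matrix-valued quadratic constraints. Your sketch does not establish it. The lifted set of matrices $\begin{bmatrix} I_q \\ Z \end{bmatrix}\begin{bmatrix} I_q \\ Z \end{bmatrix}^{\top}$ is a rank-constrained, non-convex family, so hyperplane separation applies only after relaxing to its convex hull, and tightness of that relaxation is precisely what is at stake; the separation step a priori produces a general linear functional (a matrix multiplier), and you give no mechanism by which a Sturm--Zhang-type rank-one decomposition collapses it to a single $\alpha \geq 0$ — this is not a known route to the matrix $S$-lemma, whose published proof proceeds instead through a chain of reductions to the classical Yakubovich $S$-lemma with inertia and dimension arguments. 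Extracting $\beta > 0$ is also not a "routine perturbation": $\mathcal{Z}_{q,r}(\mathcal{N})$ is in general unbounded (e.g., when $\mathcal{N}_{22}$ is singular), so pointwise strict positivity over it does not immediately give a uniform margin on the $(1,1)$ block. Finally, your declared fallback — invoking the matrix $S$-lemma of \cite{Waarde2023SIAM} — is circular as a proof of the statement itself, although it does coincide with how the paper treats the lemma, namely by citation without proof.
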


Next, we present an auxiliary result on
the system set $\Sigma_{\mathfrak{D},c}$.
For a constant $c\geq 0$ and
operators $X_{\sd},X \in \mathcal{L}(\Hs[0,\tau],\mathbb{R}^n)$
and $U \in \mathcal{L}(\Hs[0,\tau],\mathbb{R}^m)$,
we define the matrix $\mathcal{N} \in \mathbb{S}^{2n+m}$ by
\begin{equation}
	\label{eq:N_def}
	\mathcal{N} \coloneqq 
	\begin{bmatrix}
		cI_n - X_{\sd} X_{\sd}^* & 
		X_{\sd} X^* & 
		X_{\sd} U^* \\
		X X_{\sd}^* & -X X^* & 
		-X U^* \\
		U X_{\sd}^* & 
		-U X^* & -U U^* 
	\end{bmatrix}
\end{equation}
The following lemma gives a characterization
of $\Sigma_{\mathfrak{D},c}$ in terms of $\mathcal{Z}_{n,n+m}(\mathcal{N})$.
\begin{lemma}
	\label{lem:Z_Sigma_relation}
	Let $(X_{\sd},X,U)$ be the synthesis operator triple
	associated with $\mathfrak{D} = (x,u) \in \Gamma_{\tau}$. 
	For a constant $c \geq 0$,
	define $\mathcal{N} \in \mathbb{S}^{2n+m}$ 
	by \eqref{eq:N_def}
	and $\mathcal{Z}_{n,n+m}(\mathcal{N})$ by \eqref{eq:ZN_def}.
	Then the following statements are equivalent 
	for all $(A, B) \in \Sigma_{n,m}$:
	\begin{enumerate}
		\renewcommand{\labelenumi}{\textup{(\roman{enumi})}}
		\item 
		$(A,B) \in \Sigma_{\mathfrak{D},c}$. \vspace{2pt}
		\item 
		$\begin{bmatrix}
			A & B
		\end{bmatrix}^{\top} \in \mathcal{Z}_{n,n+m}(\mathcal{N})
		$.\vspace{2pt}
	\end{enumerate}
\end{lemma}
\begin{proof}
	By the definition \eqref{eq:N_def} of $\mathcal{N}$,
	we have
	\begin{equation}
		\label{eq:N_c_Xid}
		\begin{bmatrix}
			I_n \!&\! A \!& \! B
		\end{bmatrix}\mathcal{N}
		\begin{bmatrix}
			I_n \\ A^{\top} \\ B^{\top}
		\end{bmatrix} \!=
		cI_n - (X_{\sd} - AX-BU)
		(X_{\sd} - AX-BU)^*
	\end{equation}
	for all $(A,B) \in \Sigma_{n,m}$.
	First, we prove the implication (i)
	$\Rightarrow$ (ii). Let
	$(A,B) \in \Sigma_{\mathfrak{D},c}$.
	By \eqref{eq:equiv_noise},
	there exists $w \in \Delta_{\tau,c}$ such that 
	the synthesis operator $W$ associated with
	$w$ satisfies 
	\[
	X_{\sd} - AX-BU = W.
	\]
	Substituting this into \eqref{eq:N_c_Xid} and
	using
	$ WW^*\preceq  c I_n $, we obtain
	$\begin{bmatrix}
		A & B
	\end{bmatrix}^{\top} \in \mathcal{Z}_{n,n+m}(\mathcal{N})
	$.
	
	Next, we prove the implication (ii)
	$\Rightarrow$ (i).
	Suppose that $\begin{bmatrix}
		A & B
	\end{bmatrix}^{\top} \in \mathcal{Z}_{n,n+m}(\mathcal{N})$.
	By
	\eqref{eq:N_c_Xid}, we have
	\begin{equation}
		\label{eq:c_Xid_ineq}
		cI_n - (X_{\sd} - AX-BU)
		(X_{\sd} - AX-BU)^* \succeq 0.
	\end{equation}
	Integrating by parts, we obtain
	\[
	X_{\sd}\phi - AX\phi - BU\phi =
	\int_0^\tau \phi(t) \big(x'(t)-Ax(t)-Bu(t)\big)dt
	\]
	for all $\phi \in \Hs[0,\tau]$.
	This implies that
	$X_{\sd} - AX-BU$
	is the synthesis operator associated with 
	\[
	w \coloneqq x'-Ax-Bu
	\in \Ls([0,\tau];\mathbb{R}^n).
	\] 
	By \eqref{eq:c_Xid_ineq}, we have
	$w \in \Delta_{\tau,c}$.
	Thus,
	$(A,B) \in 
	\Sigma_{\mathfrak{D},c}$ holds.
\end{proof}

We are now in a position to
prove Theorem~\ref{thm:noisy_case}.

\noindent\hspace{1em}{\textit{Proof of Theorem~\ref{thm:noisy_case}:}}
First, we prove the implication 
(i)
$\Rightarrow $ (ii). By assumption,
there exist $P \in \mathbb{S}^{n}$
and $K \in \mathbb{R}^{m \times n}$ 
such that $P \succ 0$ 
and the Lyapunov inequality \eqref{eq:Lyapunov_ineq} holds
for all $(A,B) \in \Sigma_{\mathfrak{D},c}$.
Define $\mathcal{M} \in \mathbb{S}^{2n+m}$ by
\begin{equation}
	\label{eq:M_def}
	\mathcal{M} \coloneqq 
	\begin{bmatrix}
		0 & -P & - PK^{\top} \\
		-P & 0 & 0 \\
		-KP & 0 & 0
	\end{bmatrix}.
\end{equation}
Let $\mathcal{N} \in \mathbb{S}^{2n+m}$ be defined as in \eqref{eq:N_def}.
By the Lyapunov inequality \eqref{eq:Lyapunov_ineq},
we obtain
$
\begin{bmatrix}
	A & B
\end{bmatrix}^{\top} \in 
\mathcal{Z}_{n,n+m}^+(\mathcal{M})
$
for all  $(A,B) \in \Sigma_{\mathfrak{D},c}$.
Therefore,
Lemma~\ref{lem:Z_Sigma_relation} yields
$\mathcal{Z}_{n,n+m}(\mathcal{N}) \subseteq
\mathcal{Z}_{n,n+m}^+(\mathcal{M})$.

To apply Lemma~\ref{lem:S_lemma} (the matrix $S$-lemma),
we will show that the conditions in
\eqref{eq:S_lem_assump1} and 
\eqref{eq:S_lem_assump2}
are satisfied. 
The $(2,2)$-block $\mathcal{M}_{22} \in \mathbb{S}^{n+m}$ of 
$\mathcal{M}$ is given by $\mathcal{M}_{22} = 0$, and
hence it is clear that
$ \mathcal{M}_{22}\preceq 0$.
The $(2,2)$-block $\mathcal{N}_{22}
\in \mathbb{S}^{n+m}$ of 
$\mathcal{N}$ satisfies
\[
\mathcal{N}_{22} = 
-\begin{bmatrix}
	X \\ U
\end{bmatrix}
\begin{bmatrix}
	X^* & U^*
\end{bmatrix} \preceq 0.
\]
Since
\[
\Ker  \mathcal{N}_{22} = 
\Ker \begin{bmatrix}
	X^*&  U^*
\end{bmatrix},
\]
the $(1,2)$-block $\mathcal{N}_{12} =
X_{\sd} 
\begin{bmatrix}
	X^*&  U^*
\end{bmatrix}$  of 
$\mathcal{N}$ 
satisfies $\Ker \mathcal{N}_{22} \subseteq
\Ker \mathcal{N}_{12}$. Therefore,
the conditions in \eqref{eq:S_lem_assump1} 
are satisfied.

To verify the condition in \eqref{eq:S_lem_assump2},
we define $\mathcal{F}_{\mathcal{N}}\colon
\mathbb{R}^{(n+m) \times n} \to \mathbb{R}^{n \times n}$ by
\[
\mathcal{F}_{\mathcal{N}}(Z) 
\coloneqq 
\begin{bmatrix}
	I_n \\ Z
\end{bmatrix}^{\top}
\mathcal{N} 
\begin{bmatrix}
	I_n \\ Z
\end{bmatrix}.
\]
Since $\Ker \mathcal{N}_{22} \subseteq
\Ker \mathcal{N}_{12}$,
it follows from \cite[Fact~8.9.7]{Bernstein2018} that 
\begin{align*}
	\mathcal{F}_{\mathcal{N}}(Z)
	&=
	\mathcal{N}_{11} -
	\mathcal{N}_{12}\mathcal{N}_{22}^+\mathcal{N}_{12}^{\top}   +
	(Z+ \mathcal{N}_{22}^+ \mathcal{N}_{12}^{\top})^{\top}
	\mathcal{N}_{22}
	(Z+ \mathcal{N}_{22}^+ \mathcal{N}_{12}^{\top})
\end{align*}
for all $Z \in \mathbb{R}^{(n+m) \times n}$.
By Assumption~\ref{assump:true_sys_noise}
and Lemma~\ref{lem:Z_Sigma_relation},
we obtain
$\begin{bmatrix}
	A_s & B_s
\end{bmatrix}^{\top} \in \mathcal{Z}_{n,n+m}(\mathcal{N})
$.
	Combining this with
	$\mathcal{N}_{22} \preceq 0$,
	we derive
	\[
	\mathcal{N}_{11} -
	\mathcal{N}_{12}\mathcal{N}_{22}^+\mathcal{N}_{12}^{\top} \succeq
	\mathcal{F}_{\mathcal{N}}
	\left(\begin{bmatrix}
		A_s & B_s
	\end{bmatrix}^{\top}
	\right) \succeq 0.
	\]

	By Lemma~\ref{lem:S_lemma} (the matrix $S$-lemma), there exist
	scalars $\alpha \geq 0$ and $\beta >0$ such that
	the inequality \eqref{eq:MN_ineq} 
	holds with $q=n$.
	Replacing 
	$P$,
	$KP$, and $\alpha$ by $\beta P$, $\beta L$, 
	and $\alpha \beta$, respectively,
	we conclude that the LMI~\eqref{eq:LMI_noisy_case}
	is satisfied.
	
	Next, we prove the implication 
	(ii)
	$\Rightarrow $ (i).
	Define $K \coloneqq LP^{-1}$ and let 
	$\mathcal{M},\mathcal{N} \in \mathbb{S}^{2n+m}$
	be as in \eqref{eq:M_def} and
	\eqref{eq:N_def}, respectively.
	Let $(A,B) \in \Sigma_{\mathfrak{D},c}$.
	By Lemma~\ref{lem:Z_Sigma_relation} and 
	the LMI~\eqref{eq:LMI_noisy_case}, we obtain
	\begin{align}
		0 &\preceq 
		\begin{bmatrix}
			I_n & A & B
		\end{bmatrix}
		\left(
		\mathcal{M} - \alpha
		\mathcal{N} - 
		\begin{bmatrix}
			I_n & 0 \\ 0 & 0
		\end{bmatrix}
		\right)
		\begin{bmatrix}
			I_n   \\ 
			A^{\top} \\
			B^{\top}
		\end{bmatrix}
		\notag \\
		&\preceq
		-(A+BK)P - P(A+BK)^{\top}
		-  I_n \notag \\
		&\prec
		-(A+BK)P - P(A+BK)^{\top}.
		\label{eq:Lyap_proof} 
	\end{align}
	Thus, the data $(x,u)$ are informative for
	quadratic stabilization under the noise class $\Delta_{\tau,c}$.
	The last assertion concerning the Hurwitz property of $A+BK$
	follows immediately from \eqref{eq:Lyap_proof}.
	\hspace*{\fill} $\blacksquare$
	
	\subsection{Numerical example}
	\label{sec:example}
	We consider the linearized model of a 
	batch reactor given in \cite[p.~213]{Rosenbrock1974}, which
	was also used as a numerical example 
	in the data-driven control literature, e.g., \cite{DePersis2020, Possieri2025,Bosso2025}.
	The true system $(A_s, B_s)$
	and the initial state $x_0$ are given by
	\begin{align*}
		A_s = 
		\begin{bmatrix}
			1.38 &-0.2077& 6.715& -5.676 \\
			-0.5814 &-4.29 &0 &0.675 \\
			1.067 &4.273 &-6.654 &5.893 \\
			0.048 &4.273 &1.343 &-2.104
		\end{bmatrix}, \quad 
		B_s = 
		\begin{bmatrix}
			0& 0\\ 5.679& 0\\ 1.136 &-3.146\\ 1.136 &0
		\end{bmatrix},\quad 
		x_0 = 
		\begin{bmatrix}
			1 \\ -1\\ 0\\ 1
		\end{bmatrix}.
	\end{align*}
	The matrix $A_s$ has two unstable eigenvalues, $1.9910$
	and $0.0635$.
	State and input trajectories are generated on the interval
	$[0,1]$, where the input $u$ is chosen as
	\[
	u(t) =5
	\begin{bmatrix}
		\sin (2\pi t) + \sin (4\pi t) \\
		\sin (3\pi t) + \sin (6\pi t)
	\end{bmatrix},\quad 
	t \in [0,1].
	\]
	As the process noise $w$, we consider a
	zero-mean Gaussian white-noise process with covariance matrix 
	\[
	\mathrm{E}
	\left [w(t)w(s)^{\top} \right ] = \delta(t-s)10^{-1}I_4,
	\]
	where 
	$\mathrm{E}[\cdot]$ is the expectation operator 
	and $\delta$ is the Dirac
	delta function. Applying Lemma~\ref{lem:adjoint}, 
	we find that
	the squared norm of the synthesis operator associated with the
	generated noise
	is approximately $0.0905$.
	Theorem~\ref{thm:noisy_case} shows that 
	the state-input data are informative for 
	quadratic stabilization under the noise class $\Delta_{\tau,c}$
	with $c = 0.131$, where
	the LMIs are solved using
	MATLAB R2026a with YALMIP~\cite{Lofberg2004} and 
	MOSEK~\cite{MOSEK2026}.
	For $c = 0.1$, the stabilizing gain
	is obtained from the solutions 
	$P$ and $L$ of the LMIs as
	\[
	K = LP^{-1} = 
	\begin{bmatrix}
		0.4586 &  -5.1601  &  4.6073 &  -5.3129 \\
		9.1309 &   0.2532  &  7.2420  & -8.7416
	\end{bmatrix}.
	\]
	
	\section{Conclusion}
	\label{sec:conclusion}
	We developed a synthesis-operator
	framework for derivative-free data-driven
	control of continuous-time systems.
	First, we characterized the set of 
	data-consistent systems
	by embedding state and input trajectories
	into synthesis operators.
	Next,
	the finite-rank structure of the synthesis operators allowed us to obtain matrix-based characterizations of informativity properties for identification and for stabilization in the noise-free setting.
	Finally, we derived a necessary and sufficient
	LMI condition for noisy data to
	be informative for quadratic stabilization.
	Future work will extend the synthesis-operator approach to settings involving only input-output data or continuous-time data reconstructed from sampled measurements.

	\appendix
	\renewcommand{\thetheorem}{\Alph{theorem}}
	\setcounter{theorem}{0}
	
	\subsection{Norms of synthesis operators}
	\label{sec:norm_adjoint}
	In Section~\ref{sec:noisy_case},
	we have considered process noise whose associated synthesis operator $W$ satisfies $\|W\| \leq \sqrt{c}$ for some $c \geq 0$.
	To interpret this constraint,
	we examine the norm of a synthesis operator.
	Intuitively, the argument below shows that 
	$\|W\|$ is governed by
	the low-frequency components of the noise.
	In other words, 
	$\|W\|$ is small if the noise energy
	is concentrated in a high-frequency range.
	
	Let $F$ be the synthesis operator associated with
	$f \in \Ls([0,\tau]; \mathbb{R}^n)$.
	Since $\|F\| = \|F^*\|$, we turn our attention to the adjoint
	$F^*$.
	Consider $h \coloneqq F^*v$ for any $v \in \mathbb{R}^n$.
	By
	\eqref{eq:F_ad_with_C}, 
	$h$ is the solution of the boundary value
	problem 
	\begin{equation}
		\label{eq:boundary_value_problem}
		h''(t) = - f(t)^{\top} v,~~~t \in (0,\tau);\quad 
		h(0) = 0\text{~and~}h(\tau) = 0.
	\end{equation}
	From this observation, we can regard
	$\|F^*v\|_{\Hs} = \|h'\|_{\Ls}$ as the (potential) energy
	of the response $h$ driven by $- f(\cdot)^{\top} v$.
	
	To provide a frequency-domain interpretation of $\|F^*\|$, 
	define the orthonormal basis $(\psi_j)_{j \in \mathbb{N}}$ of $\Ls[0,\tau]$ by
	\[
	\psi_j(t) \coloneqq \sqrt{\frac{2}{\tau}} \sin\left(
	\frac{j \pi }{\tau} t
	\right)
	\]
	for $t \in [0,\tau]$ and $j \in \mathbb{N}$.
	Let $f_k$ denote the $k$-th element of $f$.
	For $N \in \mathbb{N}$, define
	the matrix
	$\Theta_N \in \mathbb{R}^{N \times n}$ by
	\begin{equation}
		\label{eq:Theta_N_def}
		\Theta_N \coloneqq \left[
		\frac{\langle f_k, \psi_{j} \rangle_{\Ls}}{j}
		\right]_{1 \leq j \leq N,\, 1 \leq k \leq n}.
	\end{equation}
	Here, the factor $1/j$ acts as a weight that scales
	the $j$-th row of $\Theta_N$.
	In the scalar case $n=1$, we have
	\[
	\|\Theta_N\| = \sqrt{\sum_{j=1}^N \left| \frac{\langle f, \psi_{j} \rangle_{\Ls}}{j}\right|^2}.
	\]
	Since $\langle f, \psi_{j} \rangle_{\Ls}$ is 
	the Fourier coefficient of $f$ associated with the frequency $j\pi/\tau$, the weight $1/j$
	implies that
	low-frequency
	components of $f$ contribute to
	$\|\Theta_N\|$ more than high-frequency ones.
	The following result shows that 
	$\|F\|$ is given by the limit of $\tau\|\Theta_N\|/\pi $ as $N\to \infty$.
	\begin{proposition}
		\label{prop:norms_adjoints}
		Let $F$ be the synthesis operator associated with
		$f \in \Ls([0,\tau]; \mathbb{R}^n)$. For $N \in \mathbb{N}$, define 
		the matrix $\Theta_N \in \mathbb{R}^{N\times n}$ by \eqref{eq:Theta_N_def}. Then
		\begin{equation}
			\label{eq:adjoint_norm}
			\|F\| =  \frac{\tau }{\pi}\lim_{N \to \infty}
			\|\Theta_N\|.
		\end{equation}
	\end{proposition}
	\begin{proof}
		Let $v \in \mathbb{R}^n$ be arbitrary, and let 
		$\ell^2(\mathbb{N})$ 
		be the space of square-summable sequences
		of real numbers, equipped with the standard inner product $\langle \cdot ,\cdot \rangle_{\ell^2} $. A simple calculation shows that
		the solution $h$ of the boundary value problem
		\eqref{eq:boundary_value_problem} is given by
		\[
		h(t) = \frac{\tau^2}{\pi^2}
		\sum_{j=1}^{\infty} \frac{\langle 
			f(\cdot)^{\top} v, \psi_{j }
			\rangle_{\Ls} }{j^2} \psi_{j}(t)
		\]
		for $t \in [0,\tau]$.
		Define
		$\Theta \colon \mathbb{R}^n \to \ell^2(\mathbb{N})$ by
		\[
		\Theta v \coloneqq 
		\left(
		\sum_{k=1}^n \frac{\langle f_k, \psi_{j} \rangle_{\Ls}}{j} 
		v_k
		\right)_{j \in \mathbb{N}}
		\]
		for 
		$
		v = \begin{bmatrix}
			v_1 & \cdots & v_n
		\end{bmatrix}^{\top} \in \mathbb{R}^n$.
		By the orthogonality of
		the cosine functions, it follows that for all $v \in \mathbb{R}^n$,
		\begin{equation}
			\label{eq:Theta_norm}
			\|F^*v\|_{\Hs} =
			\|h'\|_{\Ls} =
			\frac{\tau}{\pi}\|\Theta v\|_{\ell^2}.
		\end{equation}
		For $N \in \mathbb{N}$,
		define the truncation operator $\Pi_N$  on $\ell^2(\mathbb{N})$ by
		\[
		\Pi_N \eta \coloneqq 
		(\eta_1,\eta_2,\dots,\eta_N,0,0,\cdots),\quad 
		\eta = (\eta_j)_{j\in\mathbb{N}} \in \ell^2(\mathbb{N}).
		\]
		Since $\Theta$ is a finite-rank operator,
		it follows that $\|\Theta - \Pi_N \Theta\| \to 0$
		as $N \to \infty$. This and
		\eqref{eq:Theta_norm} yield \eqref{eq:adjoint_norm},
		since $\|F\| = \|F^*\|$.
	\end{proof}

\printbibliography
\end{document}